\newtheorem{theorem}{Theorem}[section]
\theoremstyle{definition}
\newtheorem{lemma}[theorem]{Lemma}
\newtheorem{definition}[theorem]{Definition}
\newtheorem{remark}[theorem]{Remark}
\newtheorem{corollary}[theorem]{Corollary}
\def\BN{\mathbb N}
\def\BZ{\mathbb Z}
\def\BQ{\mathbb Q}
\def\BR{\mathbb R}
\def\calS{\mathcal S}
\def\d{\delta}
\def\b{\beta}
\def\longto{\longrightarrow}
\def\js{\mathrm{js}}
\def\jsr{\mathrm{js}_{\BR}}
\def\bs{\mathrm{bs}}
\def\Tet{\mathrm{Tet}}
\def\U{\mathrm{U}}
\def\lt{\mathrm{lt}}
\newcommand{\qbinom}[2]{\genfrac{[}{]}{0pt}{}{#1}{#2}}
\begin{document}


\title[Quadratic integer programming and the slope conjecture]{Quadratic 
integer programming and the slope conjecture}
\author{Stavros Garoufalidis}
\address{School of Mathematics \\
         Georgia Institute of Technology \\
         Atlanta, GA 30332-0160, USA \newline 
         {\tt \url{http://www.math.gatech.edu/~stavros }}}
\email{stavros@math.gatech.edu}
\author{Roland van der Veen}
\address{Mathematisch Intstituut \\
Leiden University, Leiden \\
Niels Bohrweg 1\\
The Netherlands 
\newline 
{\tt \url{http://www.rolandvdv.nl}}}
\email{r.i.van.der.veen@math.leidenuniv.nl}
\thanks{
1991 {\em Mathematics Classification.} Primary 57N10. Secondary 57M25.
\newline
{\em Key words and phrases: knot, link, Jones polynomial, Jones slope,
quasi-polynomial, pretzel knots, fusion, fusion number of a 
knot, polytopes, incompressible surfaces, slope, tropicalization, state sums,
tight state sums, almost tight state sums, regular ideal octahedron, 
quadratic integer programming.
}
}

\date{August 1, 2016} 

\begin{abstract}
The Slope Conjecture relates a quantum knot invariant, (the degree
of the colored Jones polynomial of a knot) with a classical one 
(boundary slopes of incompressible surfaces in the
knot complement). 

The degree of the colored Jones polynomial can be computed
by a suitable (almost tight) state sum and the solution of a corresponding
quadratic integer programming problem. We illustrate this principle
for a 2-parameter family of 2-fusion knots.
Combined with the results of Dunfield and the first author, this confirms 
the Slope Conjecture for the 2-fusion knots of one sector.
\end{abstract}

\maketitle

{\footnotesize
\tableofcontents
}




\section{Introduction}
\label{sec.intro}

\subsection{The Slope Conjecture}
\label{sub.cj}

The Slope Conjecture of~\cite{Ga1} relates a quantum knot invariant, 
(the degree of the colored Jones polynomial of a knot) with a classical one 
(boundary slopes of incompressible surfaces in the
knot complement). The aim of our paper is to compute the degree of the
colored Jones polynomial of a 2-parameter family of 2-fusion knots using
methods of tropical geometry and quadratic integer programming, and
combined with the results of~\cite{DG}, to confirm the Slope Conjecture
for a large class of 2-fusion knots.

Although the results of our paper concern an identification of a
classical and a quantum knot invariant they require no prior knowledge of 
knot theory nor familiarity with incompressible surfaces or the colored 
Jones polynomial of a knot or link. As a result, we will not recall the
definition of an {\em incompressible surface} of a 3-manifold with torus
boundary, nor definition of the {\em Jones polynomial} 
$J_L(q) \in \BZ[q^{\pm 1/2}]$ of a knot or link $L$ in 3-space. These
definitions may be found in several 
texts~\cite{Ha,HO} and ~\cite{Jo,Tu,Tu:book,Kauffman}, respectively. 
A stronger quantum invariant is the {\em colored Jones polynomial}
$J_{L,n}(q) \in \BZ[q^{\pm 1/2}]$, where $n \in \BN$, which is a linear 
combination of the Jones polynomial of a link and its parallels 
\cite[Cor.2.15]{Kirby}. 

To formulate the Slope Conjecture, let $\d_K(n)$ denote the
$q$-degree of the colored Jones polynomial $J_{K,n}(q)$. It is known that
$\d_K: \BN \longto \BQ$ is a {\em quadratic quasi-polynomial}~\cite{Ga2} for
large enough $n$. In other words, for large enough $n$ we have
$$
\d_K(n)=c_{K,2}(n) n^2 + c_{K,1}(n) n + c_{K,0}(n)
$$
where $c_{K,j}: \BN \longto \BQ$ are periodic functions. 
The {\em Slope Conjecture} states that the finite set of values of $4 c_{K,2}$ 
is a subset of the set $\bs_K$ of slopes of boundary incompressible surfaces 
in the knot complement. 
The set of values of $c_{K,2}$ is referred to as the {\em Jones slopes} of the 
knot $K$. In case $c_{K,2}$ is constant, as often the case, it is called the 
Jones slope, abbreviated $\js_K$. At the time of writing no knots with more 
than one Jones slope are known to the authors.


\subsection{Boundary slopes}
\label{sub.bsk}

In general there are infinitely many non-isotopic 
boundary incompressible surfaces in the complement of a knot $K$. However, 
the set $\bs_K$ of their boundary slopes is always a nonempty finite subset
of $\BQ\cup\{\infty\}$~\cite{Ha}. The set of boundary slopes is algorithmically
computable for the case of Montesinos knots (by an algorithm of 
Hatcher-Oertel~\cite{HO}; see also~\cite{Du}) and for the case of alternating
knots (by Menasco~\cite{Menasco}) where incompressible surfaces can often be 
read from an alternating planar projection. The $A$-polynomial of a knot
determines some boundary slopes~\cite{CCGLS}. However, the $A$-polynomial
is difficult to compute, for instance it is unknown for the alternating
Montesinos knot $9_{31}$~\cite{Culler:apoly}.
Other than this, it is unknown how to produce a single non-zero boundary 
slope for a general knot, or for a family of them.

\subsection{Jones slopes, state sums and quadratic integer programming}
\label{sub.trop.deg}

There are close relations between linear programming, normal
surfaces and their boundary slopes. It is less known that that the degree of 
the colored Jones polynomial is closely related to {\em tropical geometry} and 
{\em quadratic integer programming}. The key to this relation is 
a state sum formula for the colored Jones polynomial. State sum formulas
although perhaps unappreciated, are abundant in quantum topology. 
A main point of~\cite{GL1} is that state sums imply $q$-holonomicity.
Our main point is that under some fortunate circumstances,
state sums give effective formulas for their $q$-degree. 
To produce state sums in quantum topology, one may use
\begin{itemize}
\item[(a)]
a planar projection of a knot and an $R$-matrix~\cite{Tu,Tu:book},
\item[(b)]
a shadow presentation of a knot and quantum $6j$-symbols and 
$R$-matrices~\cite{Tu:shadow,Co,CT},
\item[(c)]
a fusion presentation of a knot and quantum 
$6j$-symbols~\cite{Thurston:shadow,V,GV}.
\end{itemize}
All those state sum formulas are obtained by contractions of tensors
and in the case of the colored Jones polynomial, lead to an expression of 
the form:
\begin{equation}
\label{eq.JSnk}
J_{K,n}(q)=\sum_{k \in n P \cap \BZ^r} S(n,k)(q)
\end{equation}
where 
\begin{itemize}
\item
$n$ is a natural number, the color of the knot,
\item
$P$ is a rational convex polytope such that the lattice points $k$ of $n P$
are the admissible states of the state sum, 
\item
the summand $S(n,k)$ is a product of weights of building blocks. The weight
of a building block is a rational function of $q^{1/4}$ and its $q$-degree
is a piece-wise quadratic function of $(n,k)$.
\end{itemize}
Let $\d(f(q))$ denote the $q$-degree of a rational function 
$f(q) \in \BQ(q^{1/4})$. This is defined as follows: if $f(q)=a(q)/b(q)$
where $a(q),b(q) \in \BQ[q^{1/4}]$ with $b(q) \neq 0$, then 
$\d(f(q))=\d(a(q))-\d(b(q))$, with the understanding that when $a(q)=0$,
then $\d(a(q))=-\infty$. It is easy to see that the $q$-degree of a rational
function $f(q) \in \BQ(q^{1/4})$ is well-defined and satisfies the elementary
properties
\begin{subequations}
\begin{align}
\label{eq.tg1}
\d(f(q)  g(q) ) & =\d(f(q) )+\d(g(q) ) 
\\
\label{eq.tg2}
 \d(f(q) +g(q) ) & \leq \max\{ \d(f(q)), \d(g(q)) \}
\end{align}
\end{subequations}
The state sum~\eqref{eq.JSnk} together with the above identities 
implies that the degree $\d(n,k)$ of $S(n,k)(q)$ is a piece-wise quadratic 
polynomial in $(n,k)$. Moreover, if there is no cancellation in the leading 
term of Equation~\eqref{eq.JSnk} (we will call such formulas {\em tight}),
it follows that the degree $\d_K(n)$ of the colored Jones polynomial
$J_{K,n}(q)$ equals to $\hat \d(n)$ where
\begin{equation}
\label{eq.dkn}
\hat \d(n)=\max\{ \d(n,k) \,\, | \,\, k \in n P \cap \BZ^r \}
\end{equation}
Computing $\hat \d(n)$ is a problem in quadratic integer programming 
(in short, QIP)~\cite{LORW,Onn,LHORW,KPor}. 

The answer is given by a quadratic quasi-polynomial of 
$n$, whose coefficient of $n^2$ is independent of $n$, 
for all but finitely many $n$. 
If we are interested in the quadratic part of $\hat \d(n)$, then we can
use state sums for which the degree of the sum drops by the maximum
degree of the summand by at most a linear function of $n$. We will call
such state sums {\em almost tight}.

A related and simpler real optimization problem is the following
\begin{equation}
\label{eq.drkn}
\hat \d_{\BR}(n)=\max\{ \d(n,x) \,\, | \,\, x \in n P \}
\end{equation}
Using a change of variables $x=ny$, it is easy to see that $\hat \d_{\BR}(n)$
is a quadratic polynomial of $n$, for all but finitely many $n$.

Thus, an almost tight state sum for the colored Jones polynomial
a knot (of even more, of a family of knots) allows us to compute the degree 
of their colored Jones polynomial using QIP. Our main point is that
it is easy to produce tight state sums using fusion, and in the case they
are almost tight, it is possible to analyze ties and cancellations.
We illustrate in Theorem~\ref{thm.1} below for the 2-parameter family of 
2-fusion knots.

\subsection{$2$-fusion knots}
\label{sub.qt2fusion}

Consider the 3-component seed link $K$ as in Figure~\ref{f.seedlink}
and the knot $K(m_1,m_2)$ obtained by $(-1/m_1,-1/m_2)$ filling on $K$
for two integers $m_1,m_2$. $K(m_1,m_2)$ is the 2-parameter family of 2-fusion
knots. This terminology is explained in detail in Section~\ref{sec.kfusion}.


\begin{figure}[!htpb]
\begin{center}
\includegraphics[height=0.15\textheight]{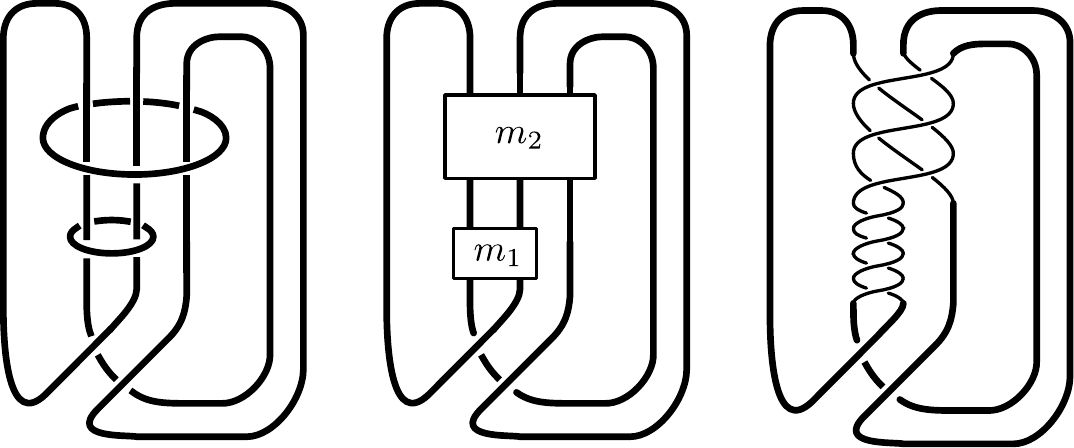}
\caption{Left: The seed link $K$ and the 2-fusion knot $K(m_1,m_2)$.
As an example $K(2,1)$ is the $(-2,3,7)$ pretzel knot.}
\label{f.seedlink}
\end{center}
\end{figure}

The 2-parameter family of 2-fusion knots includes the 2-strand torus knots,
the $(-2,3,p)$ pretzel knots and some knots that appear in the work of
Gordon-Wu related to exceptional Dehn surgery~\cite{GW}. The non-Montesinos,
non-alternating knot $K(-1,3)=K4_3$ was the focus of~\cite{GL2} regarding
a numerical confirmation of the volume conjecture. 
The topology and geometry of 2-fusion knots is explained in detail in 
Section \ref{sub.2fusiontop}.

\subsection{Our results}
\label{sub.results}

Our main Theorem~\ref{thm.1} gives an explicit formula for the
Jones slope for all 2-fusion knots $K(m_1,m_2)$. 
Recall that the Jones slope(s) $js_K$ of a knot $K$ is the set of values of
the periodic function $c_{K,2}:\BN\to \BQ$ that governs the leading order
of the $q$-degree of $J_{K,n}(q)$. In our case set of Jones slopes is a 
singleton for each pair $m_1,m_2$ so we denote by $\js(m_1,m_2)\in \BQ$ 
the unique element of the set of Jones slopes of $K(m_1,m_2)$.
The formula for $\js$ is a piece-wise rational function of $m_1,m_2$ defined 
on the lattice points $\BZ^2$ of the plane, which are partitioned into five 
sectors shown in color-coded fashion in Figure~\ref{fig.Slopes}. 
The reader may observe that the 5 branches of the function $\js: \BZ^2\to\BQ$ 
do not agree when extrapolated. 
For example for $m_1<1$ and $m_2 = 0$ the formula $2m_2+\frac{1}{2}$ from the 
red region does not agree (when extrapolated) with the actual value $0$ for 
the Jones slope at $m_2=0$. 
This disagreement disappears when we study the corresponding real optimization 
problem in Section~\ref{sec.rl} below. The branches given there
actually fit together continuously.


\begin{figure}[!htpb]
\begin{center}
\includegraphics[height=0.45\textheight]{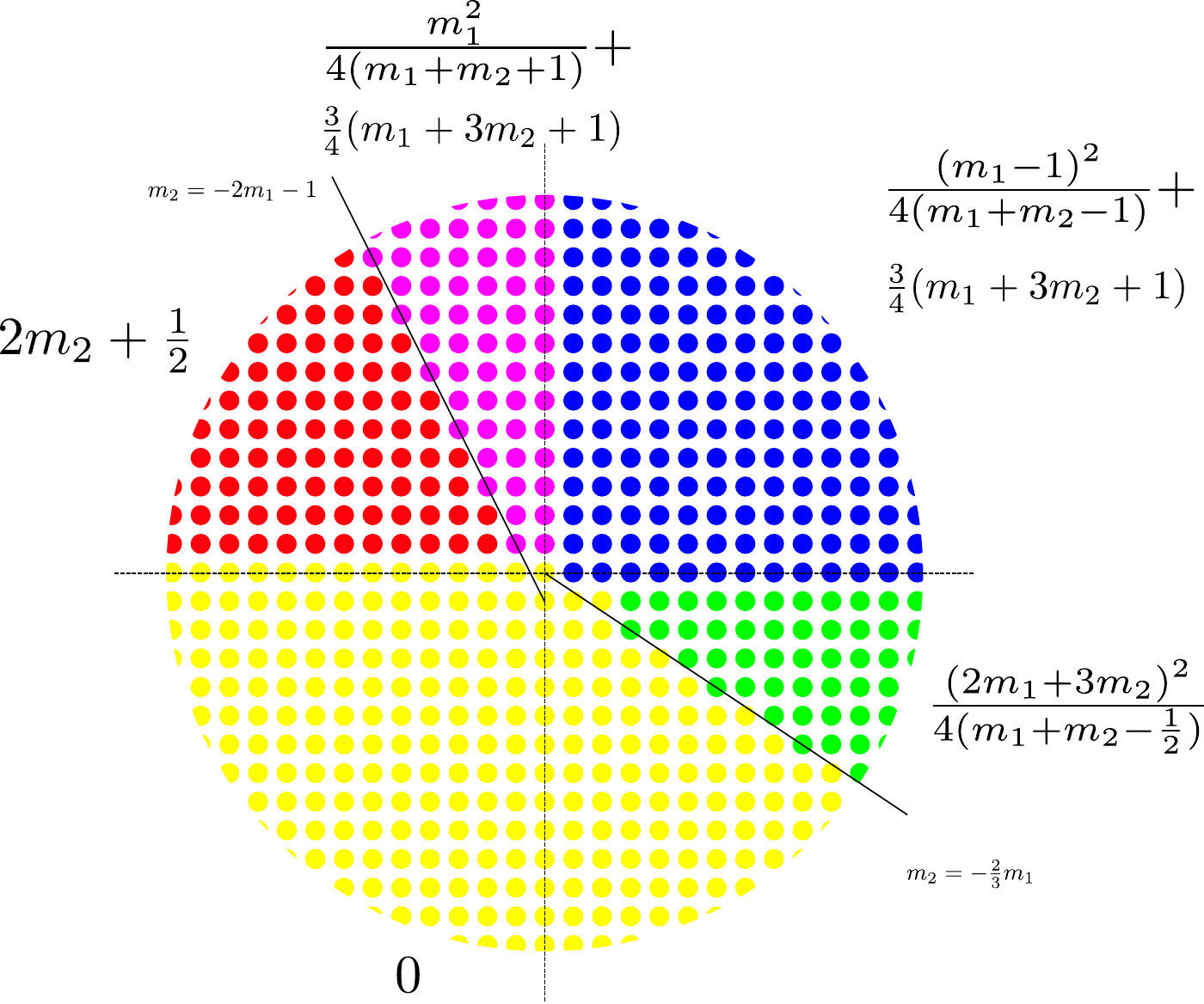}
\caption{The formula for the Jones slope of $K(m_1,m_2)$.}
\label{fig.Slopes}
\end{center}
\end{figure}

\begin{theorem}
\label{thm.1}
For any $m_1,m_2$ there is only one Jones slope. 
Moreover, if we divide the $(m_1,m_2)$-plane into regions as shown in Figure 
\ref{fig.Slopes} then the Jones slope $\js(m_1,m_2)$ of $K(m_1,m_2)$ is 
given by:
\begin{equation}
\label{eq.jslope}
\js(m_1,m_2)=
\begin{cases}
\frac{(m_1-1)^2}{4(m_1+m_2-1)}
+\frac{3m_1+9 m_2+3}{4} 
& \text{if} \,\, m_1 \geq 1, \,\, m_2\geq 0 \\
\frac{m_1^2}{4 (m_1+m_2+1)}
+\frac{3m_1+9 m_2+3}{4} 
& \text{if} \,\, m_1\leq 0, \,\, m_2\geq -1-2m_1, \,\, m_2 \geq 1 \\
2m_2 +\frac{1}{2}
& \text{if} \,\, 0< m_2, \,\, m_2< -1-2m_1 \\
0  
& \text{if} \,\, m_2\leq 0, \,\, m_2 \leq -\frac{2}{3}m_1,
\,\, \text{or} \,\, (m_1,m_2)=(2,-1) \\
\frac{(2m_1 + 3 m_2)^2}{4 (m_1+m_2-\frac{1}{2})}
& \text{if} \,\, m_2 > -\frac{2}{3}m_1, \,\, m_2\leq -1
\end{cases}
\end{equation}
with $\js(1,0)=3/2$.
\end{theorem}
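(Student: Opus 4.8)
The plan is to run, for the whole family $K(m_1,m_2)$, exactly the program laid out in Section~\ref{sub.trop.deg}: produce an almost tight state sum for the colored Jones polynomial in the form \eqref{eq.JSnk}, extract the $q$-degree of its summand, and solve the resulting quadratic integer program \eqref{eq.dkn}.

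\textbf{Step 1 (a fusion state sum).} Starting from a fusion presentation of the $3$-component seed link $K$ and performing the $(-1/m_1,-1/m_2)$ filling that defines $K(m_1,m_2)$, I would write $J_{K(m_1,m_2),n}(q)$ in the form \eqref{eq.JSnk}, whose summand $S(n,k)(q)$ is a product of quantum $6j$-symbols (tetrahedral weights $\Tet$) divided by the unknot and $\theta$-graph normalizations, multiplied by twist monomials whose $q$-exponent is a linear function of $(m_1,m_2)$ and of the state labels $k$. The admissibility polytope $P$ is cut out by the triangle inequalities among the edge labels of the $\Tet$-symbols and is independent of $m_1,m_2$; only the twist monomials carry the filling parameters. \textbf{Step 2 (degree of the summand).} Feeding the known closed formula for $\deg_q\Tet$ into the additivity \eqref{eq.tg1}, I would write $\delta(n,k)=\deg_q S(n,k)(q)$ as an explicit piecewise-quadratic polynomial in $(n,k)$, the linearity chambers in $k$-space being cut out by the inequalities coming from the $\min$'s in the tetrahedral degree formula.

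\textbf{Step 3 (the real optimization).} Substituting $k=ny$, the real relaxation \eqref{eq.drkn} becomes $\hat\delta_{\BR}(n)=n^2\max_{y\in P}f(y)+O(n)$, where $f$ is the tropical leading part of $\delta$; the Jones slope we are after is $\js(m_1,m_2)=4c_{K,2}=4\max_{y\in P}f(y)$. I would maximize the piecewise-quadratic $f$ over the polytope $P$ in the standard way: on each chamber $f$ is a quadratic with a unique critical point (located from $\nabla f=0$), and the maximum is attained either at that point when it lies in $P$, or on a face shared by two chambers, so one ends up comparing finitely many rational functions of $(m_1,m_2)$. Tracking which candidate wins as $(m_1,m_2)$ varies carves the plane into the five sectors of Figure~\ref{fig.Slopes} and yields the five branches of \eqref{eq.jslope}; in the real relaxation these branches fit together continuously, and it is the integer problem of Step 4 that sharpens and slightly breaks this picture. \textbf{Step 4 (integrality, tightness, and the exact answer).} Two passages remain: from $\hat\delta_{\BR}(n)$ to the integer optimum $\hat\delta(n)$, and from $\hat\delta(n)$ to the true degree $\delta_K(n)$. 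The first is routine: for large $n$ the lattice points of $nP$ are $O(1/n)$-dense and $f$ is locally Lipschitz, so the $n^2$-coefficient survives, which already gives the generic value of $\js$. The second is the real work: one must prove the state sum is (almost) tight, i.e.\ that the leading $q$-terms of the summands $S(n,k)(q)$ with $k$ near the maximizing state do not cancel; this uses the leading coefficients of the $\Tet$-symbols, not just their degrees, and is the ``analysis of ties and cancellations'' promised in the introduction. The conclusion is \eqref{eq.jslope} for all $(m_1,m_2)$ and all but finitely many $n$, with two anomalies handled by hand: the point $(2,-1)$, where the top-degree contributions cancel so that the true $\js$ equals $0$ rather than the naive QIP value, and $(1,0)$, where the region-$1$ formula is indeterminate and one reads off $\js(1,0)=3/2$ directly from the colored Jones polynomial of the (small) knots $K(2,-1)$ and $K(1,0)$.

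\textbf{Main obstacle.} The genuinely hard step is the cancellation analysis in Step 4: a priori a positive-dimensional sub-family of states can share the top $q$-degree, so establishing almost tightness — and with it the correct quadratic coefficient — forces one to show that their leading coefficients sum to something nonzero, a statement about the fine structure of the $6j$-symbols and the very thing that fails at $(2,-1)$. A second, more clerical but still lengthy difficulty is the piecewise-quadratic optimization of Step 3: elementary in principle, but dividing the $(m_1,m_2)$-plane into the five sectors of Figure~\ref{fig.Slopes} and controlling the behavior on their boundaries and at the exceptional points is a long case analysis.
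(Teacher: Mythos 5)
Your Steps 1--2 coincide with the paper: the fusion state sum is Theorem~\ref{thm.cjk}, and the degree of the summand is the piecewise quadratic $Q$ of Equation~\eqref{eq.Q} obtained from Lemmas~\ref{lem.tropicalblocks} and~\ref{lem.ltQ}. Your Step 3, however, is really the companion result (Theorem~\ref{thm.3}): the real relaxation carves out \emph{nine} sectors, not the five of Figure~\ref{fig.Slopes}, and by Corollary~\ref{cor.jj} it disagrees with the true Jones slope on the infinite family $(m_1,0)$ with $m_1\le 0$ as well as at $(2,-1)$ --- not just at two isolated points (the paper disposes of the whole lines $m_2\in\{0,-1\}$ at the outset, since these are $2$-strand torus knots). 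The paper instead solves the lattice problem directly in Section~\ref{sec.thm1}: monotonicity of $Q$ on each region of $nP$ pins the lattice maximizers onto a single boundary line, where $Q$ restricts to a one-variable quadratic with at most two adjacent integer maximizers. This exact localization is what makes the tie and cancellation analysis possible; your $O(1/n)$-density argument controls the $n^2$-coefficient of the lattice optimum but does not tell you which states tie, and hence cannot feed the cancellation analysis.

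The genuine gap is in your Step 4. You propose to establish the slope by showing that the leading coefficients of the tied summands ``sum to something nonzero,'' treating failure of this as an isolated anomaly at $(2,-1)$. In fact, on the entire sector $2m_1>-3m_2$, $m_2\le -1$ (the green region of Figure~\ref{fig.Slopes}), the leading sign along the maximizing diagonal $k_1=k_2$ is $(-1)^{k_1+n}$, so whenever the critical point is a half-integer the top-degree terms cancel in pairs for \emph{every} $n$: the state sum is not tight there, only almost tight, and your criterion simply fails on a full-dimensional set of parameters. What is needed --- and what Section~\ref{sub.cancel} supplies --- is a second-order analysis: explicitly resum the canceling pairs $D(k)+D(k-a)$ along the diagonal, identify the first surviving subleading term, show the resulting degree drop is only linear in $n$, and verify that the unpaired diagonal terms and the off-diagonal terms stay strictly below. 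Without this, the branch $\frac{(2m_1+3m_2)^2}{4(m_1+m_2-\frac{1}{2})}$ of \eqref{eq.jslope} is not established.
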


Combining the work of \cite[Thm.1.9]{DG} we obtain a proof for the 
slope conjecture for a large class of $2$-fusion knots. 

\begin{corollary}
\label{cor.slopeconj}
The slope conjecture is true for all 2-fusion knots $K(m_1,m_2)$ with 
$m_1>1,m_2>0$.
\end{corollary}

As the knots are generally non-Montesinos this result is beyond the reach
of other known techniques. Also the Jones slopes are of great interest in 
that they are generally not integers so that they can not be found using 
semi-adequacy.

We should remark that the incompressibility criterion of \cite{DG} can 
also be applied to prove the slope conjecture for the remaining $2$-fusion 
knots. However, this is not the focus of the present paper, and we will not
provide any further details on this separate matter.

\begin{remark}
\label{rem.involution}
Using the involution 
\begin{equation}
\label{eq.involution}
K(m_1,m_2)=-K(1-m_1,-1-m_2), \qquad
K(-1,m_2)=K(-1,-m_2)
\end{equation} 
Theorem~\ref{thm.1} computes the Jones slopes of the mirror of the
family of 2-fusion knots. Hence, for every 2-fusion knot, we obtain two
Jones slopes.
\end{remark}

\begin{remark}
\label{rem.degree}
The proof of Theorem \ref{thm.1} also gives a formula for the degree
of the colored Jones polynomial. This formula is valid for all $n$, and it
is manifestly a quadratic quasi-polynomial. See Section \ref{sec.rl}.
\end{remark}

\begin{remark}
\label{rem.latticereal}
Theorem~\ref{thm.1} has a companion Theorem~\ref{thm.3} which is the 
solution to a real quadratic optimization problem. Theorem~\ref{thm.2}
implies the existence of a function $\jsr:\BR^2\to\BR$ with the following
properties:
\begin{itemize}
\item[(a)] 
$\jsr$ is continuous and piece-wise rational, with corner
locus (i.e., locus of points where $\jsr$ is not differentiable) given by 
quadratic equalities and inequalities whose complement
divides the plane $\BR^2$ into 9 sectors, shown in Figure~\ref{fig.realmax}.
\item[(b)] $\jsr$ is a real interpolation of $\js$ in the sense that it
satisfies
$$
\jsr(m_1,m_2)=\js(m_1,m_2)
$$
for all integers $m_1,m_2$  except those of the form $(m_1,0)$ with 
$m_1\leq 0$ and $(2,-1)$. See Corollary~\ref{cor.jj} below.
\item[(c)] Each of the 9 branches of $\jsr$ (after multiplication by 4)
becomes a boundary slope of $K(m_1,m_2)$ valid in the corresponding
region, detected by the incompressibility
criterion of~\cite[Sec.8]{DG}.
\end{itemize}
\end{remark}


\section{The colored Jones polynomial of 2-fusion knots}
\label{sec.cj}

\subsection{A state sum for the colored Jones polynomial}
\label{sub.state}

The cut-and-paste axioms of TQFT allow to compute the quantum invariants
of knotted objects in terms of a few building blocks, using a combinatorial
presentation of the knotted objects. In our case, we are interested in
state sum formulas for the colored Jones function $J_{K,n}(q)$ of a knot
$K$. Of the several state sum formulas available in the literature, 
we will use the {\em fusion formulas} that appear in \cite{CFS,Co,MV,GV,KL,Tu}. 
Fusion of knots are knotted trivalent graphs. There are five building blocks 
of fusion (the functions $\mu,\nu,\U,\Theta,\Tet$ below), expressed in 
terms of quantum factorials. Recall the {\em quantum
integer} $[n]$ and the {\em quantum factorial} $[n]!$ of a natural number 
$n$ are defined by
$$
[n]=\frac{q^{n/2}-q^{-n/2}}{q^{1/2}-q^{-1/2}},
\qquad
[n]!=\prod_{k=1}^n [k]!
$$
with the convention that $[0]!=1$. Let 
$$
\qbinom{a}{a_1, a_2, \dots, a_r}=\frac{[a]!}{[a_1]! \dots [a_r]!}
$$
denote the multinomial coefficient of natural numbers $a_i$ such that
$a_1+ \dots a_r=a$. We say that a triple $(a,b,c)$ of natural
numbers is {\em admissible} if $a+b+c$ is even and the triangle inequalities
hold. In the formulas below, we use the following basic trivalent graphs
$\U,\Theta,\Tet$ colored by one, three and six natural numbers (one
in each edge of the corresponding graph) such that
the colors at every vertex form an admissible triple shown in 
Figure~\ref{f.3j6j}.

\begin{figure}[!htpb]
\begin{center}
\includegraphics[height=0.10\textheight]{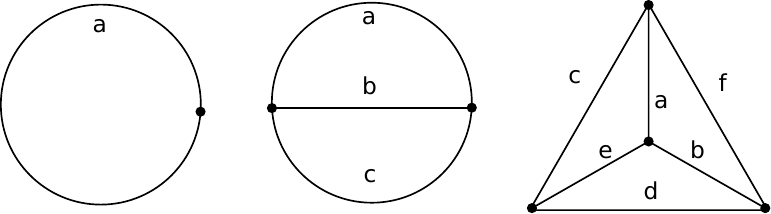}
\label{f.3j6j}
\end{center}
\end{figure}

Let us define the following functions.

\begin{eqnarray*}
\mu(a)&=& (-1)^a q^{\frac{-a(a+2)}{4}} \\
\nu(c,a,b)&=& (-1)^{\frac{a+b-c}{2}} q^{\frac{a(a+2)+b(b+2)-c(c+2)}{8}} \\
\U(a)&=&(-1)^a[a+1] \\
\Theta(a,b,c)&=& (-1)^{\frac{a+b+c}{2}}[\frac{a+b+c}{2}+1]
\qbinom{\frac{a+b+c}{2}}{\frac{-a+b+c}{2}, \frac{a-b+c}{2}, \frac{a+b-c}{2}}
\\
\Tet(a,b,c,d,e,f)&=& 
\sum_{k = \max T_i}^{\min S_j} (-1)^k [k+1]
\qbinom{k}{S_1-k , S_2-k , S_3-k , k- T_1 , k- T_2 , k- T_3 , k- T_4}
\end{eqnarray*}
where
\begin{equation}
\label{eq.Sj}
S_1 = \frac{1}{2}(a+d+b+c)\qquad S_2 = \frac{1}{2}(a+d+e+f) 
\qquad S_3 = \frac{1}{2}(b+c+e+f)
\end{equation}
\begin{equation}
\label{eq.Ti}
T_1 = \frac{1}{2}(a+b+e) \qquad T_2 = \frac{1}{2}(a+c+f)
\qquad T_3 = \frac{1}{2}(c+d+e) \qquad T_4 = \frac{1}{2}(b+d+f).
\end{equation}
An assembly of the five building blocks can compute the colored Jones
function of any knot. The next theorem is an exercise in fusion
following word for word the proof of \cite[Thm.1]{GL2}. An elementary
and self-contained introduction to fusion is available in \cite[Sec.3.2]{GL2}.
In particular, the calculation of the colored Jones polynomial of 
the $2$-fusion knot $K(-1,3)$ (generalized verbatim to all $2$-fusion knots)
is given in \cite[Sec.3.3, p.390]{GL2}.

Consider the function
\begin{eqnarray}
\label{eq.Smnk}
S(m_1,m_2,n_,k_1,k_2)(q) &=&
\frac{\mu(n)^{-w(m_1,m_2)}}{\U(n)} 
\nu(2k_1,n,n)^{2 m_1+2 m_2} \nu(n+2 k_2,2k_1,n)^{2m_2+1} 
\\ & & \notag
\cdot \frac{\U(2k_1)\U(n+2k_2)}{\Theta(n,n,2k_1) \Theta(n,2k_1,n+2k_2)}
\Tet(n,2k_1,2k_1,n,n,n+2k_2) \,.
\end{eqnarray}

\begin{theorem}
\label{thm.cjk}
For every $m_1,m_2 \in \BZ$ and $n \in \BN$, we have:
\begin{equation}
\label{eq.cjk}
J_{K(m_1,m_2),n}(q) = \sum_{(k_1,k_2) \in nP \cap \BZ^2} 
S(m_1,m_2,n_,k_1,k_2)(q) \,,
\end{equation}
where $P$ is the polytope from Figure~\ref{fig.P} and 
the {\em writhe} of $K(m_1,m_2)$ is given by $w(m_1,m_2)=2m_1+6m_2+2$.
\end{theorem}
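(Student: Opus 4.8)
The plan is to derive Theorem~\ref{thm.cjk} as a purely diagrammatic computation in the Temperley--Lieb/Kauffman-bracket skein theory, following the scheme of \cite[Thm.1]{GL2} essentially verbatim. First I would fix a fusion presentation of the seed link $K$: replace each of the two clasps/twist regions by a pair of trivalent vertices joined by an internal edge carrying an even color $2k_i$, so that the knot $K(m_1,m_2)$, colored by $n$ on its single component, becomes a closed $q$-spin network supported on a fixed trivalent graph with two internal edges (labels $2k_1$ and $2k_2$) and several external edges all labeled $n$. The admissibility conditions at the vertices — that each colored triple $(a,b,c)$ has $a+b+c$ even and satisfies the triangle inequalities — are exactly what cuts out a rational convex polytope $P$ in the $(k_1,k_2)$-plane after rescaling by $n$; identifying the inequalities coming from the vertices $(n,n,2k_1)$, $(n,2k_1,n+2k_2)$, $(n,n,n+2k_2)$ with the facets of the polytope in Figure~\ref{fig.P} is a bookkeeping step.

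Next I would evaluate that closed spin network by the standard recoupling moves. Each full twist on an edge colored $a$ contributes a framing factor $\mu(a)^{\pm 1}$, and each "half-twist" where an edge of color $c$ is fused out of two edges colored $a,b$ contributes the twist eigenvalue $\nu(c,a,b)$; the exponents $2m_1+2m_2$, $2m_2+1$ on the $\nu$-factors and the exponent $-w(m_1,m_2)$ on $\mu(n)$ record how many such twists occur as functions of the filling parameters $m_1,m_2$, so computing the writhe $w(m_1,m_2)=2m_1+6m_2+2$ is a matter of carefully counting signed crossings in the chosen diagram of $K(m_1,m_2)$ (and checking it against the known example $K(2,1)=(-2,3,7)$). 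After the twists are stripped off, the remaining planar trivalent network is reduced by repeated application of the fusion/bubble identities: each bubble on colors $(a,b,c)$ collapses to $\Theta(a,b,c)/\U(a)$ (up to the obvious edge matching), and the one irreducible tetrahedral piece that survives evaluates to $\Tet(n,2k_1,2k_1,n,n,n+2k_2)$. Collecting the unknot normalization $1/\U(n)$, the numerators $\U(2k_1)\U(n+2k_2)$ from the surviving loops, and the two theta-denominators yields exactly the summand $S(m_1,m_2,n,k_1,k_2)(q)$ of \eqref{eq.Smnk}, and summing over admissible internal labels gives \eqref{eq.cjk}.

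The main obstacle — and the only place real care is needed — is the diagrammatic/combinatorial bookkeeping: choosing a fusion presentation in which the twist regions are laid out so that the number of elementary twists is visibly linear in $m_1,m_2$, keeping track of all the framing corrections and of the conventions relating $\nu$ to $\mu$ and $\Theta$ so that no stray sign or $q$-power is dropped, and verifying that the admissibility inequalities reproduce precisely the facets of $P$ drawn in Figure~\ref{fig.P}. None of these steps is conceptually hard once the presentation of \cite{GL2} is in hand; indeed the statement itself asserts that the argument runs "word for word" as there. So the proof I would write is: (i) exhibit the fusion diagram and compute the writhe; (ii) apply the twist, bubble, and tetrahedron reductions of Kauffman--Lins recoupling theory; (iii) read off $S$ and $P$ and invoke the cited computation of \cite[Thm.1]{GL2} to justify that the manipulations are legitimate term-by-term identities of rational functions in $q^{1/4}$.
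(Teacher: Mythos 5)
Your proposal is correct and matches the paper's approach: the paper itself gives no independent argument, stating only that the theorem ``is an exercise in fusion following word for word the proof of \cite[Thm.1]{GL2},'' and your outline (fusion presentation of the seed link, twist/framing factors $\mu$ and $\nu$, bubble and tetrahedron reductions yielding the summand $S$, admissibility cutting out the polytope $P$) is precisely that computation spelled out. No discrepancy to report.
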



\begin{figure}[!htpb]
\begin{center}
\includegraphics[height=0.15\textheight]{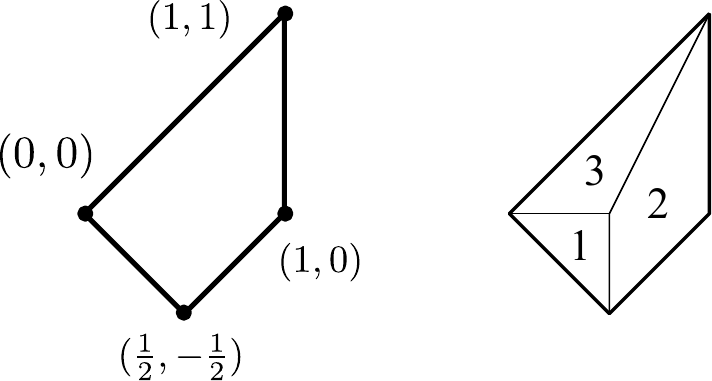}
\caption{The polygon $P$ on the left and its decomposition into three 
regions $P_1,P_2,P_3$
on the right.}
\label{fig.P}
\end{center}
\end{figure}


\begin{remark}
\label{rem.P}
Notice that for every $n \in \BN$, we have:
$$
\{(k_1,k_2) \in \BZ^2 \,\, | 0 \leq 2 k_1 \leq 2n, \quad |n-2k_1| 
\leq n+2 k_2 \leq n+2k_1\}=
nP \cap \BZ^2.
$$
For the purpose of visualization, we show the lattice points in $4P$ and $5P$
in Figure~\ref{f.P45}.
\end{remark}

\begin{figure}[!htpb]
\begin{center}
\includegraphics[height=0.15\textheight]{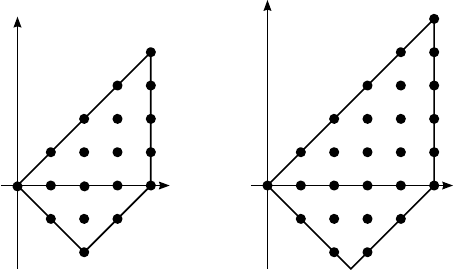}
\caption{The lattice points in $4P$ and $5P$.}
\label{f.P45}
\end{center}
\end{figure}

\subsection{The leading term of the building blocks}
\label{sub.building}

In this section we compute the leading term of the five building blocks of 
our state sum.

\begin{definition}
\label{def.tropical}
If $f(q) \in  \BQ(q^{1/4})$ is a rational function, let $\d(f)$ and
 $\lt(f)$ the minimal {\em degree} and the {\em leading coefficient} 
of the Laurent expansion of $f(q) \in \BQ((q^{1/4}))$ with respect to 
$q^{1/4}$. Let 
\begin{equation}
\label{eq.hatf}
\widehat{f}(q)=\lt(f) q^{\d(f)}
\end{equation}
denote the leading term of $f(q)$. 
\newline
\end{definition}
We may call $\widehat f(q)$ the {\em tropicalization} of $f(q)$. 
Observe the trivial but useful identity:
\begin{equation}
\label{eq.ltprod}
\widehat{f g}=\hat f \, \hat g
\end{equation}
for nonzero functions $f,g$.

\begin{lemma}
\label{lem.tropicalblocks}
For all admissible colorings we have:
\begin{eqnarray*}
\lt(\mu)(a)&=& (-1)^a \\
\lt(\nu)(c,a,b)&=& (-1)^{\frac{a+b-c}{2}} \\
\lt(\U)(a)&=&(-1)^a  \\
\lt(\Theta)(a,b,c)&=& (-1)^{\frac{a+b+c}{2}}
\\
\lt(\Tet)(a,b,c,d,e,f)&=& (-1)^{ k^*}
\end{eqnarray*}
where
$$
k^*=\min S_j
$$
and
\begin{eqnarray*}
\d(\mu)(a)&=&  \frac{-a(a+2)}{4} \\
\d(\nu)(c,a,b)&=&  \frac{a(a+2)+b(b+2)-c(c+2)}{8} \\
\d(\U)(a)&=& \frac{a}{2} \\
\d(\Theta)(a,b,c)&=& -\frac{1}{8}(a^2+b^2+c^2)+\frac{1}{4}(ab+ac+bc)
+\frac{1}{4}(a+b+c)
\\
\d(\Tet)(a,b,c,d,e,f)&=& \d(b_7)(S_1-k^* , S_2-k^* , S_3-k^* , 
k^*- T_1 , k^*- T_2 , k^*- T_3 , k^*- T_4) + \frac{k^*}{2}
\end{eqnarray*}
where $S_j$ and $T_i$ are given in Equations \eqref{eq.Sj} and \eqref{eq.Ti},
$b_7(a_1,\dots,a_7)=\qbinom{a}{a_1, a_2, \dots, a_7}$ is the 7-binomial
coefficient and
\begin{eqnarray*} 
\d(b_7)(a_1,\dots,a_7)&=&\frac{1}{4} \left(\left(\sum_{i=1}^7 a_i\right)^2
-\sum_{i=1}^7 a_i^2\right).
\end{eqnarray*}
\end{lemma}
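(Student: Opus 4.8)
The plan is to reduce every one of the five blocks to two elementary facts about quantum factorials, after which $\mu,\nu,\U,\Theta$ are immediate and only $\Tet$ needs an argument. First I would record the atomic estimates: writing $[n]=q^{-(n-1)/2}\bigl(1+q+\cdots+q^{n-1}\bigr)$ shows $\d([n])=(n-1)/2$ and $\lt([n])=1$ for $n\ge 1$; multiplying gives $\d([n]!)=\tfrac14 n(n-1)$ and $\lt([n]!)=1$; and then, using $\d(f/g)=\d(f)-\d(g)$, $\lt(f/g)=\lt(f)/\lt(g)$ together with $a_1+\cdots+a_r=a$,
$$
\d\!\left(\qbinom{a}{a_1,\dots,a_r}\right)=\frac14\Bigl(a^2-\sum_{i=1}^r a_i^2\Bigr),
\qquad
\lt\!\left(\qbinom{a}{a_1,\dots,a_r}\right)=1 .
$$
For $r=7$ this is the claimed formula for $\d(b_7)$.

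The four easy blocks then follow by inspection. Since $\mu(a)$ and $\nu(c,a,b)$ are monomials, their leading terms are themselves; for $\U(a)=(-1)^a[a+1]$ one applies the atomic estimates with $n=a+1$; and $\Theta(a,b,c)$ is $(-1)^{(a+b+c)/2}$ times $[\tfrac{a+b+c}{2}+1]$ times the trinomial coefficient with arguments $\tfrac{-a+b+c}{2},\tfrac{a-b+c}{2},\tfrac{a+b-c}{2}$, which sum to $\tfrac{a+b+c}{2}$, so the atomic estimates plus a one-line expansion of the resulting quadratic produce the displayed closed form, with $\lt(\Theta)=(-1)^{(a+b+c)/2}$.

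For $\Tet$ I would begin with the identity
$$
S_1+S_2+S_3 \;=\; T_1+T_2+T_3+T_4 \;=\; a+b+c+d+e+f \;=:\; \Sigma ,
$$
which forces the seven arguments $S_1-k,\,S_2-k,\,S_3-k,\,k-T_1,\,k-T_2,\,k-T_3,\,k-T_4$ of the multinomial in the $k$-th summand to sum to exactly $k$. For $k$ in the summation range $[\max_i T_i,\min_j S_j]$ these seven numbers are non-negative integers (their integrality, and the nonemptiness of the range, come from admissibility of the four vertex-triples; if the range is empty then $\Tet=0$), and the multinomial coefficient is a ratio of nonzero quantum factorials, hence a nonzero Laurent polynomial with $\lt=1$. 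By the atomic estimates the $k$-th summand $(-1)^k[k+1]\qbinom{k}{S_1-k,\dots,k-T_4}$ therefore has leading term $(-1)^k q^{\phi(k)}$ with
$$
\phi(k)=\frac{k}{2}+\frac14\Bigl(k^2-\sum_{j=1}^3(S_j-k)^2-\sum_{i=1}^4(k-T_i)^2\Bigr).
$$

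What remains is to maximise $\phi(k)$ over the range and to rule out cancellation of leading terms. The function $\phi$ is a concave quadratic in $k$ (the coefficient of $k^2$ is $\tfrac14(1-7)=-\tfrac32$), and differentiating, with $S_1+S_2+S_3=T_1+\cdots+T_4=\Sigma$, gives $\phi'(k)=\tfrac12+\Sigma-3k$, so $\phi$ is strictly increasing on $(-\infty,\tfrac{2\Sigma+1}{6}]$. Since $\min_j S_j\le\tfrac13(S_1+S_2+S_3)=\tfrac{\Sigma}{3}<\tfrac{2\Sigma+1}{6}$, the whole summation interval lies strictly to the left of the vertex, so $\phi$ is strictly increasing there and attains its maximum over the relevant values of $k$ at the \emph{single} point $k^*:=\min_j S_j$; a unique summand of maximal degree leaves no room for cancellation, whence $\d(\Tet)=\phi(k^*)=\tfrac{k^*}{2}+\d(b_7)(S_1-k^*,\dots,k^*-T_4)$ and $\lt(\Tet)=(-1)^{k^*}$, as asserted. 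The only genuinely non-routine ingredient is this monotonicity step — that $\min_j S_j$ lies to the left of the parabola's vertex — which is exactly what pins the leading term of $\Tet$ to a single extreme term of the sum and simultaneously prevents cancellation; everything else is bookkeeping with $\d(fg)=\d(f)+\d(g)$ and $\lt(fg)=\lt(f)\lt(g)$.
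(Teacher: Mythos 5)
Your proof is correct and follows the same route as the paper: reduce everything to the leading terms of $[a]$ and $[a]!$, and observe that the leading term of $\Tet$ comes from the top summand $k^*=\min_j S_j$. The paper dismisses that last point as ``easy to see''; your monotonicity argument (the summation range lies to the left of the vertex of the concave quadratic $\phi$ because $\min_j S_j\le \Sigma/3<(2\Sigma+1)/6$, so the maximal-degree summand is unique and no cancellation can occur) is exactly the justification the paper omits.
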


\begin{proof}
Use the fact that
$$
\widehat{[a]}=q^{\frac{a-1}{2}}
$$ 
and
$$
\widehat{[a]!}=q^{\frac{a^2-a}{4}}
$$
This computes the leading term of $\Theta$ and of the quantum multinomial
coefficients. Now $\Tet(a,b,c,d,e,f)$ is given by a 1-dimensional sum
of a variable $k$. It is easy to see that the leading term comes the maximum
value $k^*$ of $k$. The result follows.
\end{proof}


\subsection{The leading term of the summand}
\label{sub.tropical}

Consider the function $Q$ defined by

\begin{eqnarray}
\label{eq.Q}
Q(m_1,m_2,n,k_1,k_2)&=&
\frac{k_1}{2} - \frac{3 k_1^2}{2} - 3 k_1 k_2 - k_2^2 
- k_1 m_1 - k_1^2 m_1 - k_2 m_2 - 
 k_2^2 m_2 - 6 k_1 n 
\\ \notag 
& & 
- 3 k_2 n + 2 m_1 n + 4 m_2 n - k_2 m_2 n - 2 n^2 + 
 m_1 n^2 + 2 m_2 n^2
\\ \notag
& & +
\frac{1}{2} \left(
(1 + 8 k_1 + 4 k_2 + 8 n) \min\{l_1,l_2,l_3
\} - 3 \min\{l_1,l_2,l_3
\}^2\right)
\end{eqnarray}
where
$$
l_1=2 k_1 + n, \qquad l_2=2 k_1 + k_2 + n, \qquad l_3=k_2 + 2 n.
$$
Notice that for fixed $m_1,m_2$ and $n$, the function 
$k=(k_1,k_2) \mapsto Q(m_1,m_2,n,k)$ is piece-wise quadratic function.
Moreover, for all $m_1, m_2$ and $n$ the restriction of the above 
function to each region of $nP$ is a quadratic function of $(k_1,k_2)$.   

\begin{lemma}
\label{lem.ltQ}
For all $(m_1,m_2,n_,k_1,k_2)$ admissible, we have
$$
\hat S(m_1,m_2,n_,k_1,k_2) = (-1)^{k_1+n+\min\{2k_1,2k_1+k_2,k_2+n\}}
q^{Q(m_1,m_2,n_,k_1,k_2)}
$$
\end{lemma}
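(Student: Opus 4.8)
The plan is to compute $\hat S$ directly from the product formula \eqref{eq.Smnk} using the multiplicativity \eqref{eq.ltprod} of the tropicalization, so that the lemma reduces entirely to the leading-term data of the five building blocks recorded in Lemma~\ref{lem.tropicalblocks}.

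First I would expand $\widehat S$ via $\widehat{fg}=\hat f\,\hat g$: then $\d(S)$ is the $\BZ$-linear combination of the building-block degrees with the exponents read off from \eqref{eq.Smnk} --- namely $-w$ on $\mu(n)$, $-1$ on $\U(n)$, $2m_1+2m_2$ on $\nu(2k_1,n,n)$, $2m_2+1$ on $\nu(n+2k_2,2k_1,n)$, $+1$ on each of $\U(2k_1)$ and $\U(n+2k_2)$, $-1$ on each of $\Theta(n,n,2k_1)$ and $\Theta(n,2k_1,n+2k_2)$, and $+1$ on $\Tet(n,2k_1,2k_1,n,n,n+2k_2)$ --- and $\lt(S)$ is the corresponding product of signs. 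Since $\d(\mu),\d(\nu),\d(\U),\d(\Theta)$ are explicit quadratics, their combined contribution to $\d(S)$ is a transparent quadratic polynomial in $(m_1,m_2,n,k_1,k_2)$; the only piecewise piece comes from $\d(\Tet)$.

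The heart of the computation is therefore the $\Tet$ term. Substituting the coloring $(a,b,c,d,e,f)=(n,2k_1,2k_1,n,n,n+2k_2)$ into \eqref{eq.Sj}--\eqref{eq.Ti}, I would check that $S_1=2k_1+n=l_1$, $S_2=2n+k_2=l_3$, $S_3=2k_1+k_2+n=l_2$, while $T_1=T_3=n+k_1$ and $T_2=T_4=n+k_1+k_2$; hence $k^*=\min S_j=\min\{l_1,l_2,l_3\}$, exactly the $\min$ occurring in \eqref{eq.Q}. Writing $L=\min\{l_1,l_2,l_3\}$, the seven arguments of the binomial coefficient in $\d(\Tet)$ are $l_1-L,\ l_3-L,\ l_2-L,\ L-n-k_1,\ L-n-k_1-k_2,\ L-n-k_1,\ L-n-k_1-k_2$, and their sum is $L$ (as it must be for a $\Tet$), so $\d(b_7)=\frac{1}{4}\big(L^2-\sum_i a_i^2\big)$ and $\d(\Tet)=\d(b_7)+L/2$. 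Expanding $\sum_i a_i^2$ in powers of $L$ and collecting, the $L$-linear and $L$-quadratic part of $\d(\Tet)$ becomes precisely $\frac{1}{2}\big((1+8k_1+4k_2+8n)L-3L^2\big)$, i.e.\ the third line of \eqref{eq.Q}, while the $L$-free remainder is an ordinary quadratic in $(n,k_1,k_2)$.

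It then remains to assemble all contributions and verify, by a lengthy but purely mechanical comparison of coefficients, that $\d(S)=Q(m_1,m_2,n,k_1,k_2)$. For the sign I would note that $\lt(\mu)(n)^{-w}=1$ (since $w=2m_1+6m_2+2$ is even), $\lt(\nu)(2k_1,n,n)^{2m_1+2m_2}=1$ and $\lt(\U)(2k_1)=1$, and that after collecting the surviving $\U$, $\Theta$ and $\nu$ sign exponents modulo $2$ one is left with $k_1+L\pmod 2$, which equals $k_1+n+\min\{2k_1,2k_1+k_2,k_2+n\}\pmod 2$ because $L=n+\min\{2k_1,2k_1+k_2,k_2+n\}$. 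The only real obstacle is bookkeeping: the symbol $L$ enters both linearly and quadratically, so the cleanest way to carry out the final coefficient check is region by region on the three pieces of $nP$ from Figure~\ref{fig.P}, where $L$ is a single linear form in $(k_1,k_2)$ and both sides of the claimed identity are honest quadratics; a routine symbolic check on each region then completes the proof.
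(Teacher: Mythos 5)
Your proposal is correct and follows exactly the route the paper takes: its proof of Lemma~\ref{lem.ltQ} is the one-line remark that the claim follows from the building-block data of Section~\ref{sub.building} together with the multiplicativity~\eqref{eq.ltprod}, which is precisely the expansion you carry out. Your substitution checks ($S_1=l_1$, $S_2=l_3$, $S_3=l_2$, $T_1=T_3=n+k_1$, $T_2=T_4=n+k_1+k_2$, the seven arguments summing to $L$, and the sign reducing to $(-1)^{k_1+L}$ with $L=n+\min\{2k_1,2k_1+k_2,k_2+n\}$) are all accurate, so you have simply supplied the bookkeeping the authors omit.
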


\begin{proof}
It follows easily from Section \ref{sub.building} and 
Equation~\eqref{eq.ltprod}.
\end{proof}


\section{Proof of Theorem \ref{thm.1}}
\label{sec.thm1}

The proof involves four cases:

\begin{center}
\begin{tabular}{|c|c|c|c|} \hline
Case 0 & Case 1 & Case 2 & Case 3 \\ \hline
$m_2\in \{0,-1\}$ & $m_1,m_2\geq 1$ & $m_1\leq 0,m_2\geq 1$ & $m_2\leq -2$
\\ \hline
\end{tabular}
\end{center}

Case $0$ involves only alternating torus knots since $K(m_1,0) = T(2,2m_1+1)$ 
and $K(m_1,-1) = T(2,2m_1-3)$ for which the Jones slopes were already 
known~\cite{Ga1}.

In the remaining three cases we will take the following steps:

\begin{enumerate}
\item 
Estimate partial derivatives of $Q$ in the various regions $P_i$ to narrow 
down the location of the lattice points that achieve the maximum of $Q$ on 
$nP\cap \BZ^2$. In all cases they will be on a single boundary line of $Q$.
\item 
Since the restriction of $Q$ to a boundary line is an explicit quadratic 
function in one variable, there can be at most $2$ maximizers and we can 
readily compute them.
\item 
If there are two maximizers, compute the leading term of the corresponding 
summand to see if they cancel out.
\item 
If there is no cancellation, then we can evaluate $Q(m_1,m_2,n,k)/n^2$ at 
either of the maximizers $k$ to get the slope.
\item 
If there is cancellation we first have to explicitly take together all 
the canceling terms until no more cancellation occurs at the top degree. 
This happens only in the difficult Case 3.
\end{enumerate}

\subsection{Case 1: $m_1,m_2\geq 1$}
\label{sub.case1}

Recall that $Q_i$ is $Q$ restricted to the region $nP_i$ defined in Figure \ref{fig.P}. We have:
\begin{align}
\frac{\partial Q_1}{\partial k_2}<0 \qquad 
\frac{\partial Q_2}{\partial k_1},\frac{\partial Q_2}{\partial k_2}<0
\qquad 
\frac{\partial Q_3}{\partial k_2}<0 \,.
\end{align}

Before we may conclude that the maximum of $Q$ on $nP\cap\BZ^2$ is on the 
line $k_2=-k_1$ we have to check the following. For odd $n$ there could be 
a \emph{jump} across the line $k=\frac{n}{2}$ between
regions $nP_2$ and $nP_1$. We therefore set $n = 2N+1$ explicitly check that
\[Q_1(m_1, m_2, 2 N + 1, N, -N) - Q_2(m_1, m_2, 2 N + 1, N + 1, -N)>0
\] 
Restricted to the line $k_2=-k_1$, $Q$ is a negative definite quadratic in 
$k_1$ with critical point
\[
c_1 = \frac{1 - m_1 + m_2 + m_2 n}{2 (-1 + m_1 + m_2)} 
\] 
For $m_1 >1$ we have $c_1\in (-\frac{1}{2},\frac{n}{2}]$ and
for $m_1 = 1$ we have $c_1= \frac{n+1}{2}$. In both cases the maximizers are
the lattice points in the diagonal closest to $c_1$ satisfying 
$k_1\leq \frac{n}{2}$.
There may be a tie for the maximum between two adjacent points. To rule out 
the possibility of cancellation we take a look at the leading term restricted 
to the line $k_2=-k_1$. The leading term is
$(-1)^n$. Since the sign of the leading term is independent of $k_1$ along the 
diagonal, there cannot be cancellation. We may conclude that the slope is 
given by the constant term of $Q(m_1,m_2,n,c_1,-c_1)/n^2$.
This gives the slope 
$\frac{(m_1-1)^2}{4(m_1+m_2-1)}
+\frac{m_1+9 m_2+1}{4}$ 
indicated in the blue region of Figure \ref{fig.Slopes}.

\subsection{Case 2: $m_1\leq 0,m_2\geq 1$}
\label{sub.case2}

We have:
\begin{align}
\frac{\partial Q_1}{\partial k_1}>0,\frac{\partial Q_1}{\partial k_2}<0
\qquad 
\frac{\partial Q_2}{\partial k_2}<0 \qquad
\frac{\partial Q_3}{\partial k_2}<0 \,.
\end{align}

Before we may conclude that the maximum of $Q$ on $nP\cap\BZ^2$ is on the 
line $k_2=k_1-n$ we have to check the following.
For odd $n$ there could be a \emph{jump} across the line $k_1=\frac{n}{2}$ 
between regions $nP_2$ and $nP_1$. We therefore set $n = 2N+1$ explicitly 
check that
\[Q_2(m_1, m_2, 2 N + 1, N+1, -N) - Q_1(m_1, m_2, 2 N + 1, N, -N)>0
\] 

Restricted to the line $k_2=k_1-n$ the coefficient of $k_1^2$ in $Q$ is 
$a = -1 - m_1 - m_2$.
If $a>0$ the critical point $c_2$ is given by
\[
c_2 = \frac{1 - m_1 + m_2 + m_2 n}{2 (-1 + m_1 + m_2)} 
\] 
Since $c_2<\frac{3}{4}n$ the maximizer is given by $k_1=n$ and so the slope is:
$2m_2+\frac{1}{2}$ as shown in red in Figure \ref{fig.Slopes}.
If $a=0$ we have the same conclusion because along the diagonal $Q$ is now 
an increasing linear function in $k_1$. Finally if $a<0$ we need to 
determine if $c_2\in [\frac{n}{2}-\frac{1}{2},n+\frac{1}{2}]$.

We always have $c_2>\frac{n-1}{2}$, and if in addition $1 + 2 m_1 + m_2<0$ 
then $c_2>n-1/2$. This means the maximizer is $k_1=n$ and the slope is 
$\frac{1}{2}+2m_2$ as shown in red in Figure \ref{fig.Slopes}.  

If $1 + 2 m_1 + m_2 \geq 0$ then $c_2\in [\frac{n-1}{2},n+\frac{1}{2}]$ 
and the maximizers are the lattice points on the line closest to $c_2$. 
There may of course be cancellation if there is a tie. To rule this out we 
check that along the line the sign of the leading term is independent of 
$k_1$. Indeed the leading term on this line is $(-1)^n$.

We may conclude that the slope is given by the constant term of 
$Q(m_1,m_2,n,c_2,c_2-n)/n^2$
This gives the slope 
$\frac{m_1^2}{4 (m_1+m_2+1)}
+\frac{m_1+9 m_2+1}{4} $ 
indicated in the purple region of Figure \ref{fig.Slopes}.

\subsection{Case 3: $m_1\leq 0,m_2\leq -2$}
\label{sub.case3}

One can check that:
\begin{align}
\frac{\partial Q_1}{\partial k_2}>0 \qquad
\frac{\partial Q_2}{\partial k_2}>0 \qquad
\frac{\partial Q_3}{\partial k_2}>0 \,.
\end{align}
This means that the lattice maximizers of $Q$ will be on the diagonal 
$k_1=k_2$. Here the restriction of $Q$ is a quadratic and the coefficient 
of $k_1^2$ is $\frac{1}{2}-m_1-m_2$. If $m_1\leq -m_2$ then it is positive 
definite with critical point given by
\[
c_3=\frac{-3 + 2 m_1 + 2 m_2 + 2 n + 2 m_2 n}{2 (1 - 2 m_1 - 2 m_2)}
\]
We have $c_3<0$ so the maximum is attained at $k_1=n$ giving a slope
of $0$ as shown in yellow in Figure \ref{fig.Slopes}.

If $m_1> -m_2$ the quadratic $Q$ is negative definite on the diagonal 
and the critical point $c_3$ satisfies $c_3>-\frac{1}{2}$. Furthermore 
$c_3 \geq n-\frac{1}{2}$ if and only if 
$- 3 m_2 \geq 2 m_1$ and this case we get again 
the maximizer $k_1=n$ and slope $0$.

The only remaining case is $2m_1 > -3 m_2$, which means 
$c_3\in (-\frac{1}{2},n-\frac{1}{2}]$. Here we have to check for cancellation
and indeed, there will be cancellation along a subsequence since the 
leading term alternates along the diagonal, it is $(-1)^{k_1+n}$.

To finish the proof we must rule out the possibility of a new slope 
occurring when the degree drops dramatically due to cancellation. Below we 
will deal with the cancellation and show the drop in degree is at most 
linear in $n$ so that no new slope can appear. Our conclusion then is that 
the slope is given by the constant term of $Q_3(m_1,m_2,n,c_3,c_3)/n^2$
which is:
$\frac{(2m_1 + 3 m_2)^2}{4 (m_1+m_2-\frac{1}{2})}$
as shown in green in Figure \ref{fig.Slopes}.

\subsection{Analysis of the cancellation in Case 3}
\label{sub.cancel}

Cancellation happens exactly when the critical point on the diagonal is 
a half integer $c_3 \in \frac{1}{2}+\BZ$. Note also that not just the two 
terms tying for the maximum cancel out. All the terms along the diagonal 
corresponding to $k_1=c_3\pm\frac{2b+1}{2}$ cancel out to some extent. Here 
$b = 0\hdots \min(c_3,n-c_3)-\frac{1}{2}$.

Along the diagonal the Tet consists of a single term so that the summand $S$ 
simplifies considerably, call it $D$:
\begin{align*}
D(k):= S(m_1,m_2,n,k,k) &
= (-1)^{(2 m_2 + 1) n/2 + n} q^{-(2 m_2 + 1) n^2/8} [n]!\times\\
& \quad (-1)^{k} q^{-(m_1 + m_2) k (k + 1) - (2 m_2 + 1) n (2 k + 1)/4} 
\frac{[n + 2 k + 1] [2 k + 1]!}{[k]! [n + k + 1]!}
\end{align*}

To see how far the degree drops when taking together the canceling terms in 
pairs and take together $D(k)$ and $D(k-a)$. For $a\in \BN$ the result is:

\begin{align*}
D(k)+D(k-a) &= 
C\left(q^{\alpha} \{n + 2 k - 2 a + 1\} 
\frac{\{k\}! \{n + k + 1\}!}{\{k - a\}! \{n + k - a + 1\}!} \right. \\
& \quad + \left. 
 (-1)^s q^{\beta} \{n + 2 k + 1\} 
\frac{\{2 k + 1\}!}{\{2 k - 2 a + 1\}!}\right) \,.
\end{align*}

Here $C$ is an irrelevant common factor and in case of cancellation the 
monomials $q^\alpha$ and $(1)^sq^\beta$ are determined to make the leading 
terms of equal degree and opposite sign. Lastly we have taken out all 
denominators of the quantum numbers and factorials and define 
$\{k\} = [k](q^{\frac{1}{2}}-q^{-\frac{1}{2}})$.  

Since we assume the leading terms cancel we investigate the next degree term 
in both parts of the above formula. For this we can ignore $C$ and the 
monomials and restrict ourselves to the two products of terms of the form 
$\{x\}$. Both products can be simplified to remove the denominator. The 
difference in degree between the two terms of $\{x\}$ is exactly $x$. If 
$\{x\}$ is the least integer that occurs in the product then the difference 
in degree between the leading term and the highest subleading term is 
exactly $x$. For the first term $x$ is $k-a+1$ and for the second term it is
$x = 2k-2a+2$. In conclusion the highest subleading term does not cancel out 
and has degree exactly $k-a+1$ lower than the leading term.

To finish the argument we would like to show that the $b=0$ terms 
$k_1=c_3\pm\frac{1}{2}$ still produce the highest degree term after 
cancellation. This is not obvious since the degree drops by exactly 
$c_3-b+\frac{1}{2}$. In other words after cancellation
the degree of the terms corresponding to $b$ gains exactly $b$ relative to 
the $b=0$ terms. To settle this matter we show that the
difference in degree before cancellation was more than $b$.
\[
Q_3(m_1,m_2,n,c_3+\frac{1}{2},c_3
+\frac{1}{2})-Q_3(m_1,m_2,n,c_3-b-\frac{1}{2},c_3-b-\frac{1}{2}) = 
\frac{b (1 + b)}{2} (-1 + 2 m_1 + 2 m_2) > b
\]
Because $b\geq1$ and $2m_1>-3m_2$ so $-1+2m_1+2m_2>-1-m_2\geq 1$.

The same computation also shows how to deal with the diagonal terms where 
$b>\min(c_3,n-c_3)-\frac{1}{2}$ that did not suffer any cancellation
because their symmetric partner was outside of $nP$. We need to show that 
the difference in degree before cancellation is at least
$c_3+\frac{1}{2}$. So for $b=\min(c_3,n-c_3)-\frac{1}{2}$ check explicitly that
$\frac{b (1 + b)}{2} (-1 + 2 m_1 + 2 m_2)>c_3+\frac{1}{2}$. This is true 
provided that $n>m_1$. 

Finally we check that the degree of the $b=0$ terms before cancellation is 
greater than $c_3+\frac{1}{2}$ plus the degree of any off-diagonal term. 
For this we only need to consider the terms $(k_1,k_2) = (k_1,k_1-1)$. 
Again it follows by a routine computation.


\section{Real versus lattice quadratic optimization}
\label{sec.rl}

\subsection{Real quadratic optimization with parameters}
\label{sub.realop}

In this section we study the real quadratic optimization problem of 
Equation~\eqref{eq.drkn} and compare it with the lattice quadratic
optimization problem of Theorem~\ref{thm.1}.

Fix a rational convex polytope $P$ in $\BR^r$ and a piece-wise quadratic 
function $\delta$ in the variables $n,x$ where $x=(x_1,\dots,x_r)$.
Then, we have:
$$
\hat \d_{\BR}(n):=\max\{ \d(n,x) \,\, | \,\, x \in n P \} =
\max\{ \d(n,nx) \,\, | \,\, x \in P \}  \,.
$$
Observe that $\d(n,nx)$ is a quadratic polynomial in $n$ with coefficients
piece-wise quadratic polynomial in $x$. it follows that for $n$ large
enough, $\hat \d_{\BR}(n)$ is given by a quadratic polynomial in $n$.
If $\jsr$ denote the coefficient of $n^2$ in $\hat \d_{\BR}(n)$, 
and $\d_2(x)$ denotes the coefficient of $n^2$ in $\d(n,nx)$
then we have:
$$
\jsr = \max\{ \d_2(x) \,\, | \,\, x \in P \}  \,.
$$
If $\d$ depends on some additional parameters $m \in \BR^r$, then
we get a function 
\begin{equation}
\label{eq.jsr}
\jsr: \BR^r \mapsto \BR \,.
\end{equation} 
Assume that dependence of $\d$ on $m$ is polynomial with real coefficients.
To compute $\jsr(m)$, consider the piece-wise quadratic polynomial
(in the $x$ variable) $\d_2(m,x)$, which achieves a maximum at some point of 
the compact set $P$. Subdividing $P$ if necessary, we may assume that 
$\d_2(m,x)$ is a polynomial in $x$. If the maximum $\hat x$ is at the interior 
of $P$, since $\d_2(m,x)$ is quadratic, its gradient is an affine linear
function of $x$, hence it has a unique zero. In that case, it follows that 
$\hat x$ is the
unique critical point of $\d_2(m,x)$ and $\d_2(m,x)$ has negative
definite quadratic part. Since the coefficients of the quadratic function 
$\d_2(m,x)$ of $x$ are polynomials in $m$, it follows that in the above case
the coefficients of $\hat x$ are rational functions of $m$. The condition 
that $\hat x$ is a maximum point in the interior of $P$ can be
expressed by polynomial equalities and inequalities on $m$. This defines
a {\em semi-algebraic set}~\cite{Basu}.
On the other hand, if $\hat x$ lies in the boundary of $P$, then either
$\hat x$ is a vertex of $P$ or there exists a face $F$ of $P$ such that
$\hat x$ lies in the relative interior of $F$. Restricting $\d_2(m,x)$
and using induction on $r$, or evaluating at $\hat x$ a vertex of $P$
implies the following.

\begin{theorem}
\label{thm.2}
With the above assumptions, $\jsr: \BR^r \mapsto \BR$ is a piece-wise
rational function of $m$, defined on finitely many sectors whose corner
locus is a closed semi-algebraic set of dimension at most $r-1$. 
Moreover, $\jsr$ is continuous.
\end{theorem}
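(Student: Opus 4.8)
The plan is to prove Theorem~\ref{thm.2} by an induction on the dimension $r$ of the polytope $P$, following the dichotomy already set up in the discussion preceding the statement: the maximizer $\hat x$ of $\d_2(m,x)$ on the compact set $P$ is either an interior point of a face of $P$ (including $P$ itself, i.e.\ a face of full dimension) or a vertex. First I would reduce to the case where $\d_2(m,x)$ is a single polynomial in $x$ (rather than merely piece-wise quadratic) by subdividing $P$ into finitely many subpolytopes; since $\jsr(m)$ is the maximum over this finite collection of the corresponding partial maxima, and a finite maximum of continuous piece-wise rational functions is continuous and piece-wise rational with semi-algebraic corner locus, it suffices to treat each piece separately. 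On each piece I record the finitely many faces $F$ of the subpolytope, including the improper face $F=P$; for each face $F$ I will produce a candidate value $v_F(m)$.

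Next, for a fixed face $F$ of dimension $d$, I would parametrize the affine hull of $F$ by $d$ affine coordinates and restrict $\d_2(m,x)$ to get a quadratic polynomial $g_F(m,y)$ in $y\in\BR^d$ whose coefficients are polynomial in $m$. If $d=0$ then $F$ is a vertex and $v_F(m):=\d_2(m,\text{vertex})$ is manifestly polynomial, hence rational, in $m$. If $d\geq 1$, the candidate maximizer in the relative interior of $F$ is the unique critical point of $g_F(m,\cdot)$, obtained by solving the linear system $\nabla_y g_F(m,y)=0$; by Cramer's rule this critical point $\hat y(m)$ has coordinates that are rational functions of $m$ (with denominator a determinant depending polynomially on $m$), and substituting back gives $v_F(m)=g_F(m,\hat y(m))$, again a rational function of $m$. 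The conditions that $\hat y(m)$ actually lies in the relative interior of $F$ and that $g_F(m,\cdot)$ is negative semidefinite there (so that this critical point is a genuine maximum on the affine hull, not a saddle or minimum) are polynomial equalities and inequalities in $m$ once cleared of denominators; this carves out a semi-algebraic subset $U_F$ of parameter space. I would then define, for each $m$, $\jsr(m)=\max_F v_F(m)$ where $F$ ranges over faces for which the relevant critical/vertex point lies in $P$; because the global maximum of a continuous function on a compact convex set is always attained at such a point, this maximum over finitely many rational functions equals the true optimum $\jsr(m)$, and the argument of the maximum changing is governed by the semi-algebraic sets $U_F$ together with the sign loci of the differences $v_F(m)-v_{F'}(m)$. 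Continuity of $\jsr$ is then automatic: it is a finite maximum of continuous functions, each $v_F$ being continuous on the (relatively closed) region where it is the active branch, because there the denominator determinant does not vanish. That the corner locus is closed semi-algebraic of dimension at most $r-1$ follows because it is contained in the finite union of the zero sets of the $v_F-v_{F'}$ and the boundaries of the $U_F$, each a proper semi-algebraic subset of $\BR^r$ (proper because two distinct rational functions agree only on a lower-dimensional set), and a finite union of closed semi-algebraic sets of dimension $\leq r-1$ has the same property; finally $\jsr$ is differentiable off this locus since locally it coincides with a single rational function whose denominator is nonzero.

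The main obstacle I anticipate is not any single calculation but the bookkeeping needed to make the induction and the face-enumeration honest: one must check that the finitely many candidate branches $v_F$ genuinely glue to a \emph{continuous} function, i.e.\ that as $m$ crosses from $U_F$ into $U_{F'}$ the active value does not jump, and that no spurious branch (a critical point of indefinite quadratic part, or one escaping $P$) is ever selected. Handling this cleanly is exactly where the induction on $r$ earns its keep: restricting to a proper face $F$ reduces the dimension, and the inductive hypothesis supplies continuity and the semi-algebraic structure of the optimum over $F$, so that the full-dimensional analysis only has to compare the interior critical value against the values inherited from the boundary faces. The negative-definiteness observation from the paragraph before the theorem is what guarantees that an interior maximizer, when it exists, is unique and that its defining conditions are semi-algebraic, which is the linchpin of the whole argument.
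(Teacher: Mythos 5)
Your proposal follows essentially the same route as the paper, whose own proof is just the sketch in the paragraph preceding the statement: subdivide $P$ so that $\d_2(m,\cdot)$ is a single quadratic, then case on whether the maximizer is an interior critical point (unique, with negative definite quadratic part, cut out by polynomial equalities and inequalities in $m$) or lies on a vertex or in the relative interior of a proper face, the latter handled by induction on $r$. Your write-up supplies more detail than the paper does (Cramer's rule for the rationality of the critical point, the explicit bookkeeping of the sets $U_F$ and the differences $v_F-v_{F'}$ for the corner locus and continuity), but the underlying argument is the same.
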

Recall that the {\em corner locus} of a piece-wise function on $\BR^r$
is the set of points where the function is not differentiable.
Note that the proof  of Theorem~\ref{thm.2} is constructive, and easier 
than the corresponding lattice optimization problem, since we do not have to 
worry about ties. Moreover, since we are doing doing a sum, we
do not have to worry about cancellations.

\subsection{The case of $2$-fusion knots}
\label{sub.realop2}

We now illustrate Theorem~\ref{thm.2} for the case of $2$-fusion knots,
where $\d(m_1,m_2,n,x_1,x_2)$ is given by Equation~\eqref{eq.Q}. Notice that
$\d(m,n,x)$ is an affine linear function of $m=(m_1,m_2)\in \BR^2$.
A case analysis (similar but easier than the one of Section~\ref{sec.thm1}
shows the following.


\begin{figure}[!htpb]
\begin{center}
\includegraphics[height=0.30\textheight]{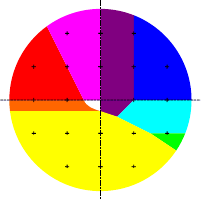}
\caption{The nine regions of $\jsr$ of Theorem~\ref{thm.3}.}
\label{fig.realmax}
\end{center}
\end{figure}

Define $\jsr(m_1,m_2)$ to be the real maximum of the summand for the fusion 
state sum of $K(m_1,m_2)$.

\begin{theorem}
\label{thm.3}
If we divide the $(m_1,m_2)$-plane into regions as shown in Figure 
\ref{fig.realmax} then $\jsr(m_1,m_2)$ is given by:
\[\jsr(m_1,m_2)=\]
\begin{equation}
\label{eq.jrs2}
\begin{cases}
\frac{(m_1-1)^2}{4(m_1+m_2-1)}
+\frac{3m_1+9 m_2+3}{4} 
& \text{if} \,\, m_1 > 1, \,\, m_2\geq 0 \\
\frac{3m_1+9m_2+3}{4}
& \text{if} \,\, 0 \leq m_1 \leq 1, \,\, 1 + m_1 + 3 m_2 \geq 0, 
\,\, 1 - m_1 + m_2 \geq 0\\
\frac{m_1^2}{4 (m_1+m_2+1)}
+\frac{3m_1+9 m_2+3}{4} 
& \text{if} \,\, m_1\leq 0, \,\, m_2 \geq 0, \,\, m_2\geq -1-2m_1 \\
2m_2 + \frac{1}{2}
& \text{if} \,\, m_2>0, \,\, 1 + 2 m_1 + m_2 \geq 0 \\
\frac{(3 m_2+1)^2}{4 (m_2+\frac{1}{2})}
& \text{if}\,\, -\frac{1}{3}\leq m_2 \leq 0, 
\,\, 1 + 2 m_1 + 3 m_2 + 4 m_1 m_2 \leq 0 \\
0  
& \text{if}\,\, m_2 \leq -\frac{1}{3}, \,\, 1 + m_1 + 3 m_2 \leq 0, 
\,\, 1 + 2 m_1 + 4 m_2 \leq 0, \,\, m_2 \leq -\frac{2}{3}m_1 \\
\frac{(2m_1 + 3 m_2)^2}{4 (m_1+m_2-\frac{1}{2})}
& \text{if} \,\, m_2 > -\frac{2}{3}m_1, \,\, m_2\leq -1\\
m_1 + 2 m_2 + \frac{1}{2} 
& \text{if} \,\, -1\leq m_2\leq 0 , \,\,  1 - m_1 + m_2 \leq 0, 
\,\, 1 + 2 m_1 + 4 m_2 \geq 0\\
I(m_1,m_2)
& \text{if} \,\, 1 + 2 m_1 + 3 m_2 + 4 m_1 m_2 \geq 0, 
\,\, -\frac{1}{2} \leq m_1 \leq 0, \,\, -\frac{1}{3} \leq m_2 \leq 0
\end{cases}
\end{equation}
where 
\[
I(m_1,m_2)
=\frac{3 + 6 m_1 + 4 m_1^2 + 18 m_2 + 24 m_1 m_2 + 8 m_1^2 m_2 + 27 m_2^2 + 
18 m_1 m_2^2}{4 (1 + m_1 + 3 m_2 + 2 m_1 m_2)}
\]
\end{theorem}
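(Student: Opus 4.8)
The plan is to follow the blueprint of Section~\ref{sub.realop} and apply it to the explicit piece-wise quadratic $Q$ of Equation~\eqref{eq.Q}, tracking everything as a function of the parameters $m=(m_1,m_2)$. First I would perform the substitution $x=ny$ in $Q(m_1,m_2,n,x_1,x_2)$ and extract $\d_2(m,y)$, the coefficient of $n^2$; since $Q$ is affine linear in $m$, so is $\d_2(m,y)$, and it is piece-wise quadratic in $y$ on the three regions $P_1,P_2,P_3$ of the scaled polytope $P$ (Figure~\ref{fig.P}). By Theorem~\ref{thm.2} we already know the answer $\jsr(m)=\max\{\d_2(m,y):y\in P\}$ is continuous and piece-wise rational with semi-algebraic corner locus, so the content of Theorem~\ref{thm.3} is purely the \emph{identification} of the nine branches and their regions of validity. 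The strategy, region by region, is exactly the constructive dichotomy in the proof of Theorem~\ref{thm.2}: for each candidate branch, either the maximizer $\hat y$ is the unique critical point of $\d_2(m,\cdot)$ in the interior of a 2-dimensional region $P_i$ (which forces the quadratic part to be negative definite there, a condition polynomial in $m$), or it lies on an edge of $P$ (restrict $\d_2$ to that line, a 1-variable quadratic, and repeat the interior-vs-endpoint analysis), or it is a vertex of $P$ (evaluate directly, giving the affine-in-$m$ branches like $2m_2+\tfrac12$, $m_1+2m_2+\tfrac12$, etc.).

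Concretely I would: (1) compute the gradient $\nabla_y Q_i$ on each region and, as in Sections~\ref{sub.case1}--\ref{sub.case3}, read off the sign of each partial derivative as an affine inequality in $m$; this cuts the $m$-plane into the regions where the maximum is forced onto a particular edge ($y_2=-y_1$, $y_2=y_1-1$, or $y_1=y_2$ in scaled coordinates) or a particular vertex. (2) On each relevant edge solve the 1-variable quadratic: the sign of the leading coefficient (an affine function of $m$, e.g. $-1-m_1-m_2$ or $\tfrac12-m_1-m_2$) splits into the concave subcase, where the critical point $c_i/n$ is compared against the edge endpoints to decide whether the maximizer is interior or at a vertex, and the convex subcase, where the maximum is automatically at an endpoint. (3) Whenever the interior critical point of a 2-dimensional region is the true maximizer, substitute it back to get the rational branches $\tfrac{(m_1-1)^2}{4(m_1+m_2-1)}+\cdots$, $\tfrac{m_1^2}{4(m_1+m_2+1)}+\cdots$, $\tfrac{(2m_1+3m_2)^2}{4(m_1+m_2-1/2)}$, and the genuinely two-dimensional-interior branch $I(m_1,m_2)$ with its denominator $4(1+m_1+3m_2+2m_1m_2)$. (4) Collect the resulting semi-algebraic conditions and check they tile $\BR^2$ into the nine sectors of Figure~\ref{fig.realmax}. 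Since we are maximizing a single smooth-on-each-piece function and not a sum, no cancellation or tie analysis is needed — this is precisely the simplification noted after Theorem~\ref{thm.2}.

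The main obstacle is bookkeeping rather than conceptual: the matching-up of the boundaries. One must verify that adjacent branches actually agree along their shared corner locus (continuity, guaranteed abstractly by Theorem~\ref{thm.2} but needed explicitly to know the nine regions fit together as drawn), and — more delicately — that along each edge of $P$ the comparison between the restricted critical point and the edge's endpoints produces exactly the stated inequalities and no spurious extra subdivisions. In particular the region carrying $I(m_1,m_2)$ is where the unconstrained critical point of some $Q_i$ genuinely lies in the relative interior of a 2-cell, and one must confirm that the negative-definiteness condition there, together with the three containment inequalities $1+2m_1+3m_2+4m_1m_2\geq 0$, $-\tfrac12\leq m_1\leq 0$, $-\tfrac13\leq m_2\leq 0$, is exactly the locus where that critical point beats every boundary value; this is the one place where the 2-dimensional interior case of the proof of Theorem~\ref{thm.2} is invoked nontrivially and where an arithmetic slip is most likely. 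Everything else reduces to the same kind of routine quadratic-in-one-variable computations already carried out in Section~\ref{sec.thm1}, only easier because the floor/ceiling rounding of the lattice problem is absent.
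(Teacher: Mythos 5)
Your proposal matches the paper's own (very terse) proof exactly: the paper simply states that Theorem~\ref{thm.3} follows from ``a case analysis similar but easier than the one of Section~\ref{sec.thm1}'', i.e.\ the constructive procedure from the proof of Theorem~\ref{thm.2} applied to the explicit $Q$ of Equation~\eqref{eq.Q}, with the tie and cancellation analysis dropped. Your identification of which branches arise from vertices, edge critical points, and the one genuinely two-dimensional interior critical point (the $I(m_1,m_2)$ branch, whose quadratic-in-$m$ denominator reflects the Hessian determinant) is a correct and in fact more detailed account of the same argument.
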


\begin{corollary}
\label{cor.jj}
An comparison between Theorems~\ref{thm.1} and \ref{thm.3} reveals that
$\js(m_1,m_2)=\jsr(m_1,m_2)$ for all pairs of integers $(m_1,m_2) \in \BZ^2$
except those of the form $(m_1,0)$ with $m_1\leq 0$ and $(2,-1)$.
For these exceptional pairs, $K(m_1,m_2)$ is a torus knot.
\end{corollary}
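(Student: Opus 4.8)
The plan is to prove Corollary~\ref{cor.jj} by direct comparison of the two piecewise formulas~\eqref{eq.jslope} and~\eqref{eq.jrs2}, branch by branch, keeping careful track of which integer lattice points fall in which region. First I would overlay the two region decompositions: the five sectors of Figure~\ref{fig.Slopes} for $\js$ and the nine sectors of Figure~\ref{fig.realmax} for $\jsr$. Wherever a lattice point $(m_1,m_2)\in\BZ^2$ lies in a region where both formulas are given by the \emph{same} rational expression, equality is immediate. So the real work is confined to the lattice points lying in regions where the two decompositions genuinely differ — in particular the four ``new'' branches of $\jsr$ (the ones with values $\tfrac{m_1+1}{2}+\tfrac{m_1+9m_2+1}{4}$, $\tfrac{(3m_2+1)^2}{4(m_2+1/2)}$, $m_1+2m_2+\tfrac12$, and $I(m_1,m_2)$) that have no counterpart in Theorem~\ref{thm.1}.

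The key observation to exploit is that each of these extra branches of $\jsr$ occupies a region that contains \emph{very few} integer points, because the defining inequalities pin $m_1$ and $m_2$ to small intervals. For example the last branch requires $-\tfrac12\le m_1\le 0$ and $-\tfrac13\le m_2\le 0$, so the only lattice point is $(m_1,m_2)=(0,0)$; the branch $\tfrac{(3m_2+1)^2}{4(m_2+1/2)}$ requires $-\tfrac13\le m_2\le 0$, forcing $m_2=0$, and then $1+2m_1+3m_2+4m_1m_2\le 0$ gives $m_1\le -\tfrac12$, i.e. $m_1\le -1$; the branch $\tfrac{m_1+1}{2}+\tfrac{m_1+9m_2+1}{4}$ forces $0\le m_1\le 1$, hence $m_1\in\{0,1\}$. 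So for each extra branch I would simply enumerate the finitely many relevant lattice points (or the one free integer parameter), substitute into both $\jsr$ and the applicable branch of $\js$, and check the resulting identity of rational functions in the remaining variable. This is exactly where the claimed exceptions $(m_1,0)$ with $m_1\le 0$ and $(2,-1)$ will surface: on the $m_2=0$ axis with $m_1\le 0$ the point lies in the $\tfrac{(3m_2+1)^2}{4(m_2+1/2)}$-branch of $\jsr$, giving value $\tfrac{1}{2}$, whereas~\eqref{eq.jslope} puts $(m_1,0)$ with $m_1\le 0$ into the fourth case ($m_2\le 0$, $m_2\le-\tfrac23 m_1$) with value $0$; and $(2,-1)$ is explicitly singled out in~\eqref{eq.jslope} with value $0$ while $\jsr(2,-1)$ lands in the $\tfrac{(2m_1+3m_2)^2}{4(m_1+m_2-1/2)}$-branch giving $\tfrac{1}{4}\cdot\tfrac{1}{3/2}=\tfrac16$. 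After dispatching the extra branches, I would do the same bookkeeping in the reverse direction: for each of the five $\js$-regions, walk over the $\jsr$-regions it meets and verify that on integer points the formulas coincide except at the listed exceptions — most pairs share a formula verbatim, and the few remaining overlaps (e.g. the boundary strips near $m_1\in\{0,1\}$ or $m_2\in\{-1,0\}$) involve only a bounded range of one coordinate and reduce to a one-variable polynomial identity.

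The main obstacle is not any single hard computation but the combinatorial care needed to make the region-matching exhaustive: one must be certain that every lattice point is covered by some region of \emph{each} decomposition (the overlaps noted in Remark~\ref{rem.latticereal} mean a point may lie in several $\js$-regions, but Theorem~\ref{thm.1}'s formula is well-defined because the overlapping branches agree there — this itself should be checked, or cited from the proof of Theorem~\ref{thm.1}), and that no lattice point in a difference-region is overlooked. Concretely I would organize the verification as a finite table: rows indexed by $\jsr$-branches, and for each, the explicit finite list of integer points it contains together with the value of $\js$ there. Once that table is complete and every entry except the stated exceptions shows agreement, the corollary follows. The final sentence — that for the exceptional pairs $K(m_1,m_2)$ is a torus knot — is then just an appeal to Lemma~\ref{lem.top}, which identifies $K(m_1,0)=T(2,2m_1+1)$ and $K(m_1,-1)=T(2,2m_1-3)$, so that $(2,-1)$ gives $T(2,1)$, the unknot.
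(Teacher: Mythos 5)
Your approach---overlaying the five-region and nine-region decompositions and checking the finitely many lattice points where the branches genuinely differ---is exactly what the paper does: Corollary~\ref{cor.jj} is asserted as a direct comparison of Theorems~\ref{thm.1} and~\ref{thm.3} with no further argument, so your plan reproduces it (and usefully makes explicit why the extra branches of $\jsr$ contain so few integer points). One arithmetic slip: $\jsr(2,-1)=\frac{(2\cdot 2+3\cdot(-1))^2}{4\left(2-1-\frac{1}{2}\right)}=\frac{1}{2}$, not $\frac{1}{6}$ (the denominator is $\frac{1}{2}$, not $\frac{3}{2}$), though the conclusion that it differs from $\js(2,-1)=0$ is unaffected.
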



\section{$k$-seed links and $k$-fusion knots}
\label{sec.kfusion}

\subsection{Seeds and fusion}
\label{sub.defk}

There are several ways to tabulate and classify knots, and among them
\begin{itemize}
\item[(a)] 
by crossing number as was done by Rolfsen~\cite{Rf}, 
\item[(b)]
by the number of ideal tetrahedra (for hyperbolic knots) as is the standard 
in hyperbolic geometry~\cite{Th,snappy}, 
\item[(c)]
by arborescent planar projections, studied by Conway and 
Bonahon-Siebenmann~\cite{Co,BS},
\item[(d)]
by fusion~\cite{Thurston:shadow},
\item[(e)]
by shadows~\cite{Tu:shadow}.
\end{itemize}

Here we review the fusion construction of knots (and more generally, knotted 
trivalent graphs) which originates from cut and paste axioms in quantum 
topology. The construction was introduced by Bar-Natan and Thurston, appeared
in~\cite{Thurston:shadow} and further studied by the second author~\cite{V}.
Our definition of fusion is reminiscent to W. Thurston's hyperbolic Dehn 
filling \cite{Th}, and differs from a construction of knots by the 
same name (fusion) that appears in Kawauchi's book \cite[p.171]{Kawauchi}.


\begin{figure}[!htpb]
\begin{center}
\includegraphics[height=0.15\textheight]{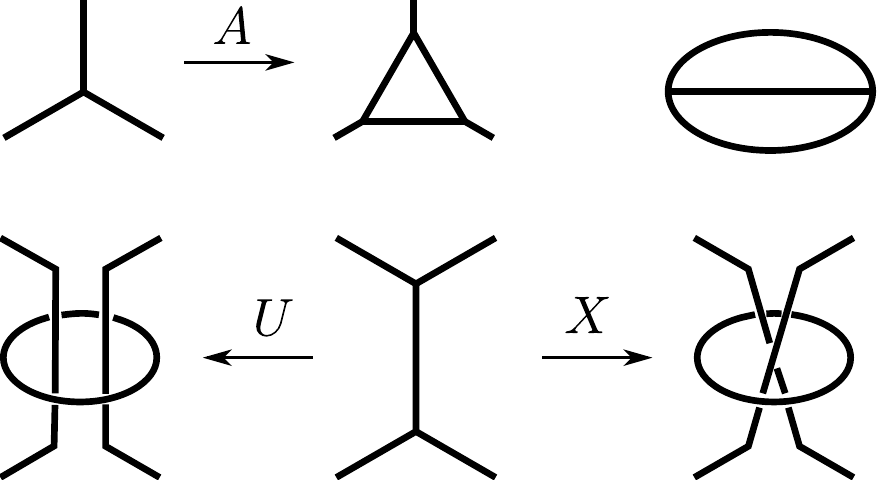}
\caption{The moves $A$,$U$,$X$ and the theta graph (upper right).}
\label{fig.Moves}
\end{center}
\end{figure}

\begin{definition}
\label{def.kseed}
A {\em seed link} is a link that can be produced from the theta graph by 
applying 
the moves $A,U,X$ shown in figure \ref{fig.Moves}. The additional components 
created by $U$ and $X$ are called \emph{belts}. 
A $k$-seed link is a seed link with $k$ belts.
\end{definition}

Note that the sign of the crossing introduced by the $X$-move is does not 
affect the complement of the seed link. If desired we may always perform all 
the $A$ moves first.

\begin{definition}
\label{def.kfusion}
Let $L$ be a $k$-seed link together with an ordering of its belts.
Define the $k$-fusion link $L(m_1,\hdots,m_k)$ to be the link obtained by
$-\frac{1}{m_j}$ Dehn filling on the $j$-th belt of $L$ for all $j=1,\dots,k$.
\end{definition}

Recall that the result of $-1/m$ {\em Dehn filling} along an unknot 
$C$ which 
bounds a disk $D$ replaces a string that meets $D$ with $m$ full twists, 
right-handed if $m>0$ and left-handed if $m<0$; see Figure~\ref{f.dehn} 
and~\cite{Kirby:calc}.


\begin{figure}[!htpb]
\begin{center}
\includegraphics[height=0.12\textheight]{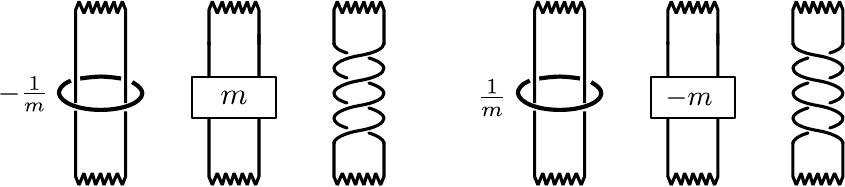}
\caption{The effect of Dehn filling on a link. In the picture we have taken 
$m=2$.}
\label{f.dehn}
\end{center}
\end{figure}


In a picture of a seed link the belts will always be enumerated from bottom 
to top. So for example the first belt of $K$ is the smallest one.

As suggested above, fusion is not just a way to produce a special class of 
knots. All knots and links can be presented this way although not in a 
unique way.

\begin{theorem}
\label{thm.fusion}
Any link is a $k$-fusion link for some $k$. 
The number of fusions is at most the number of twist regions of a diagram.
\end{theorem}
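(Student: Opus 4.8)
The plan is to give a constructive algorithm that, given any diagram $D$ of a link $L$, builds $L$ as a $k$-fusion link with $k$ bounded by the number of crossing regions of $D$. First I would fix a connected diagram $D$ of $L$, drawn on $S^2$, with $c$ crossings; a small general-position perturbation lets us assume the diagram is generic. The starting observation is that the theta graph, being a spatial trivalent graph with two vertices and three edges, serves as the universal seed: the move $A$ allows us to inflate any edge into a ``bigon chain,'' so by iterating $A$ we can produce an arbitrarily large trivalent graph whose underlying abstract graph is a planar thickening of $D$ (each crossing replaced by a small $4$-valent region that we will later resolve). The moves $U$ and $X$ are what actually install the crossings: each $X$-move creates one belt and a single crossing between the two strands passing near it, and by the remark after Definition~\ref{def.kseed} the sign of that crossing is immaterial for the complement. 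So the combinatorial heart of the argument is: realize the $4$-valent graph underlying $D$ as the image of a seed link's projection, using one belt per crossing (or better, per crossing \emph{region}), and then recover the actual over/under data by Dehn filling.

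The key steps, in order, would be: (1) isotope $D$ into a ``brick'' or ``grid'' normal form so that its crossings are organized into crossing regions (maximal subdiagrams where all the action happens), counting these regions as $r$; (2) for each crossing region, show that the local tangle it carries can be obtained from a piece of the theta graph by a sequence of $A$-moves (to create the requisite strands) followed by exactly one $X$-move (to create the crossings, at the cost of one belt) and as many $U$-moves as needed for the belts to encircle the correct bundle of strands; (3) assemble these local pieces: the $A$-moves between regions glue the trivalent skeleton together into a single seed link $L'$ with $r$ belts; (4) verify that the $-1/m_j$ Dehn filling on the $j$-th belt, for the appropriate integers $m_j$ read off from the crossing region (how many full twists that region represents), recovers precisely the link $L$, using the description of Dehn filling on an unknot as insertion of $|m|$ full twists (Figure~\ref{f.dehn}). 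This gives $L = L'(m_1,\dots,m_r)$, hence $L$ is an $r$-fusion link with $r \le \#\{\text{crossing regions of }D\}$.

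The main obstacle, I expect, is step (2)--(3): making the local-to-global assembly precise. One has to be careful that the $A$-moves used to build one crossing region's skeleton are compatible with those of its neighbours, i.e., that the strand counts match along the edges shared between regions, and that no extra belts are forced at the interfaces. The cleanest way to handle this is probably to first prove the statement for a diagram in a very rigid normal form---say, a closed braid, or a plat closure, or Thurston's ``brick diagram'' normal form---where the crossing regions are literally the twist regions and the interfaces are trivial parallel strands, and then argue that every diagram can be isotoped into such a form without increasing the number of crossing regions. A secondary subtlety is the base case: one must check that the theta graph itself, with zero belts, is a $0$-fusion link (trivially) and that a single $X$-move applied to it already produces, e.g., the Hopf link and hence after filling the $(2,2m)$ torus links, so that the inductive machine has something to start from. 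None of these steps require hard computation; the work is entirely in bookkeeping the moves, and in choosing the normal form so that the bound on $k$ comes out as stated.
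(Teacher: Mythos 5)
First, note that the paper does not actually prove Theorem~\ref{thm.fusion}: it is quoted from \cite{V}, where it is derived from Turaev's theory of shadows, so there is no in-paper argument to measure your proposal against. Your overall strategy --- one belt per crossing (twist) region, an $X$-move to account for odd parity and a $U$-move for even parity, and $-1/m$ Dehn filling on the belt to supply the remaining full twists --- is the correct reading of Definitions~\ref{def.kseed} and~\ref{def.kfusion} and is in the spirit of the cited proof.

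The genuine gap is in your steps (2)--(3), specifically the claim that ``by iterating $A$ we can produce an arbitrarily large trivalent graph whose underlying abstract graph is a planar thickening of $D$.'' The $A$-move blows a trivalent vertex up into a triangle, so the planar trivalent graphs reachable from the theta graph by $A$-moves alone form a very restricted class (dually, the stacked triangulations of $S^2$: theta, then the tetrahedron graph, then graphs obtained by repeatedly replacing vertices with triangles). The skeleton you need --- the diagram $D$ with each crossing region collapsed to a fused edge --- is an essentially arbitrary planar trivalent graph and need not lie in this class. In this calculus the only other graph-building moves are $U$ and $X$, and each of them costs a belt; indeed the belt count is forced to satisfy $\#U+\#X=1+\#A$, since each $A$-move adds two vertices and each unzip removes two (compare the proof of Lemma~\ref{lem.links}, where one $A$-move is followed by exactly two unzips). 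So any ``structural'' unzip needed merely to realize the skeleton inflates $k$ beyond the number of crossing regions, and your proposal offers no mechanism for absorbing these extra belts. Your suggested remedy --- isotope $D$ into a rigid normal form (closed braid, plat, brick diagram) whose collapsed skeleton is $A$-generated --- is exactly where the content of the theorem lies, and it is named but not carried out; until that reduction is performed, together with a check that it does not increase the number of crossing regions, the bound $k\le\#\{\text{crossing regions of } D\}$ is not established. The parity bookkeeping, the base case, and the use of Dehn filling to insert full twists are all fine.
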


This theorem has its roots in Turaev's theory of shadows. A self-contained 
proof can be found in \cite{V}.

\subsection{$1$ and $2$-fusion knots}
\label{sub.12fusion}

We now specialize the discussion of $k$-fusion knots to the case $k=1,2$.
Figure~\ref{fig.Seeds} lists the sets of $1$-seed and $2$-seed links.
Since we are interested in knots, let $\calS_k$ denote the finite set of seed 
links with $k$ belts and $k+1$ components. 


\begin{figure}[!htpb]
\begin{center}
\includegraphics[height=0.30\textheight]{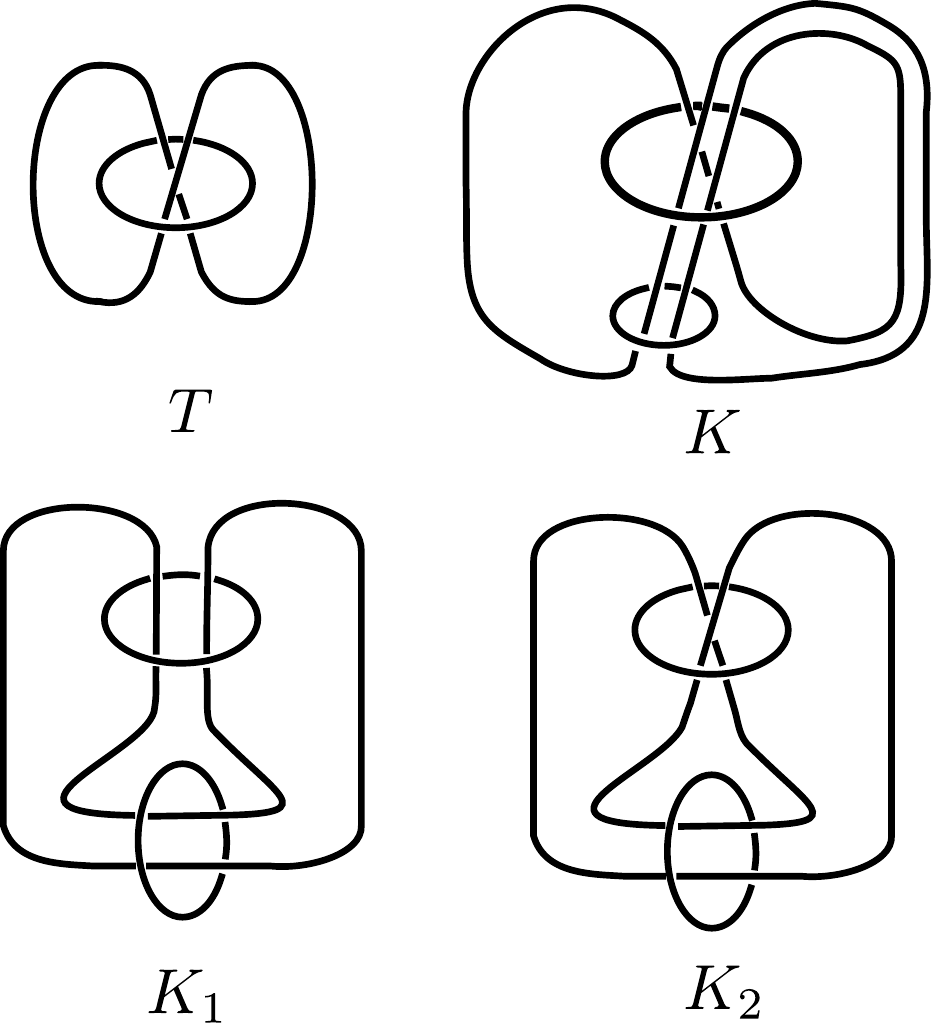}
\caption{The seed links 
$T = L4a1 = 4^2_1 = T(2,4)$ torus link, 
$K_1=L6a4=6^3_2=t12067$, $K=L10n84=10^3_{19}=t12039$ and 
$K_2=L8n5=8^3_9=t12066$.} 
\label{fig.Seeds}
\end{center}
\end{figure}

\begin{lemma}
\label{lem.links}
Up to mirror image, we have
$$
\calS_1=\{T\}, \qquad \calS_2= \{K_1,K_2,K\}
$$
where $T,K_i,K$ are the links shown in Figure~\ref{fig.Seeds}. 
\end{lemma}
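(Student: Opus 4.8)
The plan is to enumerate the seed links with $k=2$ belts directly from Definition~\ref{def.kseed}, using the fact that a seed link is obtained from the theta graph by a sequence of $A$, $U$, and $X$ moves, and that a $2$-seed link has exactly two belts, hence exactly two moves among $\{U,X\}$. By the remark following Definition~\ref{def.kseed}, the sign of the $X$-crossing is irrelevant for the complement, and all $A$-moves may be performed first; so up to the ambient homeomorphism of the complement a $2$-seed link is determined by (i) a trivalent graph $G$ obtained from the theta graph by some number of $A$-moves, together with (ii) a choice of where to insert the two belt-creating moves ($U$ or $X$) on the edges of $G$. Since we additionally require that the resulting link have exactly $k+1 = 3$ components (so that after Dehn filling on the belts we get a knot), this pins down the number of $A$-moves and the combinatorial type quite rigidly.

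First I would set up the bookkeeping: each $A$-move on an edge of a trivalent graph replaces that edge by a ``bigon'' type configuration and increases the first Betti number, while $U$ and $X$ moves each add one new unknotted component (a belt) linking an edge. I would track the number of components of the underlying link as a function of the move sequence, and impose the constraint ``$3$ components total, $2$ of which are belts'', which forces the non-belt part to be a single circle, i.e.\ an unknot or a twisted band carrying the theta-graph's edges in a specific pattern. This combinatorial constraint should leave only finitely many graphs $G$ to consider, and on each such $G$ only finitely many placements of the two belts up to the symmetries of $G$.

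Next I would carry out the finite enumeration: list the admissible graphs $G$, and for each one list the inequivalent pairs of belt-placements, using the symmetry group of $G$ (and the mirror symmetry, since the statement is ``up to mirror image'') to cut down the list. For each surviving configuration I would identify the resulting link with one of $K_1, K_2, K$ by computing a standard invariant — e.g.\ recognizing the diagram after simplification, or matching with the Thistlethwaite/SnapPy census identifiers $L6a4$, $L8n5$, $L10n84$ quoted in Figure~\ref{fig.Seeds} (linking numbers between belts and the core, crossing number, and hyperbolic volume of the complement suffice to tell the three apart, and to confirm no others arise). The analogous but shorter argument gives $\calS_1 = \{T\}$.

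The main obstacle I expect is \emph{completeness} rather than the identification of the listed links: one must be sure the move-sequence enumeration is genuinely exhaustive and that two different-looking sequences producing the same complement are correctly identified, so that nothing is double-counted or missed. In other words, the delicate point is showing that the relation ``same complement'' imposed by the $A$-move normal form and the sign-indifference of $X$ collapses the a priori large set of length-$(\text{few})$ move sequences down to exactly $\{K_1,K_2,K\}$ (up to mirrors). I would handle this by working with the underlying knotted trivalent graph before the belt insertions, classifying those with the correct Euler-characteristic/component count, and only then inserting belts — this makes the finiteness transparent and the case-check manageable.
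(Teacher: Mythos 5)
Your plan is essentially the paper's own proof: the paper likewise enumerates the move sequences with the $A$-moves performed first (for $k=2$ exactly one $A$-move, yielding the tetrahedron graph), places the two belt-creating $U$/$X$ moves on its edges up to symmetry, identifies the outcomes as $K_1$, $K_2$, $K$ (and $T$ for $k=1$), and dismisses all remaining sequences as giving either homeomorphic complements or links with extra non-belt components. Your component-count justification for the number of $A$-moves is, if anything, slightly more explicit than the paper's one-line ``one checks that all other sequences with at most one $A$ move\dots''.
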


\begin{proof}
The seed link $T$ is obtained from the theta graph by a single 
$X$ move. The links $K_1$ and $K_2$ are obtained by first doing an $A$ move 
to get a tetrahedron graph and then applying two $U's$ or a $U$ and an $X$ 
on a pair of disjoint edges. Finally $K$ is obtained from the tetrahedron by 
doing one $X$ move and then a $U$ move on one of the edges newly created by 
the $X$. One checks that all other sequences with at most one $A$ move
either give links with homeomorphic complement or links including two 
components that are not belts.
\end{proof}

$T(m)$ is the well-understood torus knot $T(2,2m+1)$. 
Observe that $K$ is the seed link of the fusion knots $K(m_1,m_2)$. 
$K_1(m_1,m_2)$ and $K_2(m_1,m_2)$ are alternating double-twist knots (with
an even or odd number of half-twists) that appear 
in~\cite{HS}. The Slope Conjecture is known for alternating knots~\cite{Ga1}. 
In particular, the Jones slopes are integers. 

The next lemma which can be proved using \cite{snappy} summarizes the 
hyperbolic geometry of the seed links $K_1$ and $K$. 

\begin{lemma}
\label{lem.2}
Each of the links $K_1$ and $K$ is obtained by face-pairings of 
two regular ideal octahedra. $K_1$ and $K$ are scissors congruent with volume 
$7.327724753\dots$, commensurable with a common $4$-fold cover, and have a 
common orbifold quotient, the Picard orbifod $H^3/\mathrm{PSL}(2,\BZ[i])$.
%
%
%
%
\end{lemma}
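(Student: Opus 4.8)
The plan is to establish the four assertions largely by explicit computation with \cite{snappy}, supplemented by standard facts about arithmetic Kleinian groups. The starting point is to read off link diagrams for $K_1$ and $K$ from Figure~\ref{fig.Seeds} — equivalently, load the census manifolds $\mathtt{t12067}$ and $\mathtt{t12039}$, or the links $L6a4$ and $L10n84$ — and pass to ideal triangulations of their complements. First I would compute the canonical (Epstein--Penner) cell decomposition of each complement, or directly verify that the natural decomposition coming from the diagram is geometric, and check that in both cases one obtains exactly two ideal octahedra, each with all dihedral angles $\pi/2$, i.e. isometric to the \emph{regular} ideal octahedron. Since the regular ideal octahedron has volume $8\Lambda(\pi/4)=3.663862\ldots$, summing gives the stated volume $7.327724753\ldots$. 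As a sanity check, $K_1=L6a4$ is the Borromean rings, whose complement is classically a union of two regular ideal octahedra.

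Scissors congruence is then immediate: two complete hyperbolic $3$-manifolds that admit geometric decompositions into the same multiset of ideal polyhedra represent the same class in the scissors congruence group $\P(H^3)$; here both are the disjoint union of two copies of the regular ideal octahedron, so the complements of $K_1$ and $K$ are scissors congruent, and in particular have equal volume and equal Dehn invariant. For commensurability I would compute the invariant trace field of each group with \cite{snappy}; in both cases it is $\BQ(i)$ (discriminant $-4$). Because each group contains parabolics, the associated invariant quaternion algebra is the matrix algebra $M_2(\BQ(i))$, so both groups are arithmetic, hence commensurable with the Bianchi group $\mathrm{PSL}(2,\BZ[i])$ — the Picard group — and therefore with one another. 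To exhibit the common orbifold quotient concretely rather than just abstract commensurability, I would subdivide the regular ideal octahedron into copies of a fundamental domain for $\mathrm{PSL}(2,\BZ[i])$ and check that the face pairings of each complement descend to the Picard orbifold $H^3/\mathrm{PSL}(2,\BZ[i])$; equivalently, have SnapPy verify that each complement covers the orbifold determined by the (symmetry-extended) commensurator of its holonomy group.

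Finally, for the common $4$-fold cover I would enumerate all covers of $H^3/\Gamma_{K_1}$ and of $H^3/\Gamma_{K}$ of degree up to $4$ (all of volume $8\Lambda(\pi/4)$ in the relevant degree) and use \texttt{is\_isometric\_to} to locate a manifold common to both lists; its existence also follows abstractly once $\Gamma_{K_1}$ and $\Gamma_{K}$ have been realized as subgroups of $\mathrm{PSL}(2,\BZ[i])$ of the appropriate indices. The step I expect to be the main obstacle is making the claims ``common $4$-fold cover'' and ``common orbifold quotient equal to the Picard orbifold'' fully rigorous rather than merely deducing commensurability: this requires either a careful finite search through low-degree covers together with isometry verification, or an explicit combinatorial model of the covering maps built from the octahedral decomposition and the known fundamental domain of the Picard group.
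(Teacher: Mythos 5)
Your proposal is correct and matches the paper's approach: the paper offers no written proof, stating only that the lemma "can be proved using \cite{snappy}", which is exactly the computational verification you outline (canonical cell decompositions into two regular ideal octahedra, volume $2\cdot 3.663862\ldots$, invariant trace field $\BQ(i)$ with parabolics forcing arithmeticity and hence commensurability with the Picard group, and a finite search for the common cover). Your identification of $K_1=L6a4$ as the Borromean rings and your flagged concern about making the "common $4$-fold cover" and "common orbifold quotient" claims rigorous are both apt, and go beyond what the paper itself records.
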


%

\subsection{The topology and geometry of the 2-fusion knots 
$K(m_1,m_2)$}
\label{sub.2fusiontop}

In this section we summarize what is known about the topology and geometry
of 2-fusion knots. The section is independent of the results of our paper,
and we include it for completeness. 

The 2-parameter family of 2-fusion knots specializes to 
\begin{itemize}
\item
The 2-strand torus knots by
$K(m_1,0)=T(2,2m_1+1)$.
\item
The non-alternating pretzel knots by $K(m_1,1)=(-2,3,2m_1+3)$ pretzel.
In particular, we have:
\begin{align*}
K(2,1)&=(-2,3,7) & K(1,1)&=(-2,3,5)=10_{124} & 
K(0,1)&=(-2,3,3)=8_{19} \\ 
K(-1,1)&=(-2,3,1)=5_1 &
K(-2,1)&=(-2,3,-1)=5_2 & K(-3,1)&=(-2,3,-3)=8_{20}.
\end{align*}
\item
Gordon's knots that appear in exceptional Dehn surgery~\cite{GW}. More
precisely, if $L^{GW}_2$ and $L^{GW}_3$ denote the
two 2-component links that appear in \cite[Fig.24.1]{GW}, then 
$L^{GW}_2(n)=K(-1,n)$. 
These two families intersect at the 
$(-2,3,7)$ pretzel knot; see also \cite[Fig.26]{E-M}.
Moreover, the knot $K(-1,3)=K4_3$ (following the notation of the 
census~\cite{snappy}) was the focus of \cite{GL2}. 
\end{itemize}
We thank Cameron Gordon for pointing out to us these specializations.

The next lemma summarizes some topological properties of the family 
$K(m_1,m_2)$. 

\begin{lemma}
\label{lem.top}
\rm{(a)}
$K(m_1,m_2)$ is the closure of the 3-string braid $\b_{m_1,m_2}$, where
$$
\b_{m_1,m_2}=b a^{2m_1+1}(ab)^{3 m_2}
$$
where $s_1=a, s_2=b$ are the standard generators of the braid group.
\newline
\rm{(b)}
$K(m_1,m_2)$ is a twisted torus knot obtained from the torus knot 
$T(3,3m_2+1)$ by applying $m_1$ full twists on two strings.
\newline
\rm{(c)}
$K(m_1,m_2)$ is a tunnel number $1$ knot, hence it is strongly invertible.
See~\cite{Lee} and also~\cite[Fact 1.2]{MSY}.
\newline
\rm{(d)} 
We have involutions
\begin{equation}
\label{eq.involution2}
K(m_1,m_2)=-K(1-m_1,-1-m_2), \qquad
K(-1,m_2)=K(-1,-m_2)
\end{equation} 
\rm{(e)}
$K(m_1,m_2)$ is hyperbolic when $m_1 \neq 0, 1$ and $m_2 \neq 0,-1$. 
\end{lemma}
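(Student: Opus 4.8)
The plan is to derive everything from a single braid picture of the seed link, so I begin with part (a). I would isotope the seed link $K$ of Figure~\ref{f.seedlink} --- which by the proof of Lemma~\ref{lem.links} is the spatial tetrahedral graph after one $X$-move followed by one $U$-move --- into the normal form consisting of a closed $3$-braid together with its two belts, where the smaller belt bounds a disk meeting two adjacent strands and the larger belt bounds a disk meeting all three strands. Recall (cf.\ the discussion of Dehn filling and Figure~\ref{fig.Moves}) that $-1/m$ filling on an unknotted belt bounding a disk $D$ replaces the strands through $D$ by $m$ full right-handed twists; hence filling the small belt splices in the $3$-braid $a^{2m_1}$ (one full twist on two adjacent strands is $s_1^2=a^2$) and filling the large belt splices in the central element $(ab)^{3m_2}=\Delta^{2m_2}$ (one full twist on all three strands is $(s_1s_2)^3$). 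Reading the residual word off the picture gives $b\,a$, so $K(m_1,m_2)$ is the closure of $b\,a\cdot a^{2m_1}\cdot(ab)^{3m_2}=b\,a^{2m_1+1}(ab)^{3m_2}$; the ambiguity in where the inserted twists are spliced changes this word only by a conjugation in $B_3$, since $(ab)^{3m_2}$ is central. I would confirm this against the known specializations: $K(m_1,0)$ Markov-destabilizes to the $2$-braid closure of $a^{2m_1+1}=T(2,2m_1+1)$, $K(0,0)$ is the unknot, and $K(m_1,-1)$ reduces to $T(2,2m_1-3)$, matching the cases used in Section~\ref{sec.thm1}; and the exponent sum of $b\,a^{2m_1+1}(ab)^{3m_2}$ is $2m_1+6m_2+2$, agreeing with the writhe recorded in Theorem~\ref{thm.cjk}.

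Part (b) is then a short computation in $B_3$: since $(ab)^3=\Delta^2$ is central, $b\,a^{2m_1+1}(ab)^{3m_2}=\bigl(b\,a(ab)^{3m_2}\bigr)a^{2m_1}$, and the closure of $b\,a(ab)^{3m_2}$ equals that of $(ab)^{3m_2+1}$ (conjugate $ba$ to $ab$ and absorb the conjugation through the central factor), which is $T(3,3m_2+1)$; re-inserting $a^{2m_1}=s_1^{2m_1}$ exhibits $K(m_1,m_2)$ as $T(3,3m_2+1)$ with $m_1$ full twists on two parallel strands, the claimed twisted torus knot. For part (c), in the normal form of (a) a core arc of one of the two filling solid tori is an unknotting tunnel, so $K(m_1,m_2)$ has tunnel number one; this also follows from the analysis of twisted torus knots with two twisted strands in~\cite{Lee}, and strong invertibility then follows from \cite[Fact 1.2]{MSY} (it is also visible directly from the symmetry of the seed link used for (d)).

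For the involutions \eqref{eq.involution} I would exhibit symmetries of the seed link $K$ and track them through the $-1/m_j$ fillings; equivalently, one can argue entirely in $B_3$. Applying the mirror map $s_i\mapsto s_i^{-1}$, and using $(a^{-1}b^{-1})^{3m_2}=\Delta^{-2m_2}$ together with $\Delta a\Delta^{-1}=b$ and a conjugation, rewrites the closure of the mirror of $b\,a^{2m_1+1}(ab)^{3m_2}$ as the closure of $b\,a^{3-2m_1}(ab)^{-3-3m_2}$, which is the braid word for $K(1-m_1,-1-m_2)$; matching the exponent sums ($-(2m_1+6m_2+2)$ on both sides) keeps the bookkeeping honest. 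The second relation $K(-1,m_2)=K(-1,-m_2)$ (compatibly with the first involution) comes from an extra symmetry present only when $m_1=-1$: then $K(-1,m_2)=L^{GW}_2(m_2)$ is one of the Gordon--Wu links of~\cite{GW}, whose complement carries the required symmetry. For part (e), the non-hyperbolic members are exactly the torus-knot specializations: by (b) and Markov moves these are $K(m_1,0)=T(2,2m_1+1)$, $K(m_1,-1)=T(2,2m_1-3)$, $K(0,m_2)=T(3,3m_2+1)$ and $K(1,m_2)=T(3,3m_2+2)$; for all other $(m_1,m_2)$, hyperbolicity of $K(m_1,m_2)$ follows from hyperbolicity of the seed-link complement (Lemma~\ref{lem.2}) via Thurston's hyperbolic Dehn surgery theorem once the finitely many exceptional fillings are pinned down, which is the content of the exceptional-surgery analysis of Gordon--Wu~\cite{GW}; the uniformly clean range $m_1,m_2>6$ is treated in~\cite{FKP:filling}.

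I expect the only real difficulty to be the completeness assertion in (e): Dehn surgery immediately gives hyperbolicity for \emph{all but finitely many} fillings, but certifying that the only exceptions are the four torus-knot lines above --- and not some sporadic small-$(m_1,m_2)$ case --- requires the detailed exceptional-filling bookkeeping, which is exactly where the excerpt defers to~\cite{GW} and to the range $m_1,m_2>6$ of~\cite{FKP:filling}. Parts (a)--(d) are otherwise routine once the normal-form picture of (a) is fixed; the one point there needing genuine care is consistently matching the splice locations of the inserted full twists with conjugations in $B_3$.
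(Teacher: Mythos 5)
First, a point of comparison: the paper does not actually prove Lemma~\ref{lem.top}. Section~\ref{sub.2fusiontop} is explicitly a survey included ``for completeness,'' and the lemma is stated with pointers to \cite{Lee}, \cite{MSY}, \cite{GW} and \cite{FKP:filling}, so your proposal is not an alternative to the paper's argument but an attempt to supply one. Judged on those terms, parts (a) and (b) are in good shape: putting the seed link into closed-$3$-braid normal form with a $2$-strand belt and a $3$-strand belt, splicing in $a^{2m_1}$ and the central element $(ab)^{3m_2}=\Delta^{2m_2}$, and cross-checking against $K(m_1,0)=T(2,2m_1+1)$, $K(m_1,-1)=T(2,2m_1-3)$ and the writhe $2m_1+6m_2+2$ is the right way to pin down the braid word, and the twisted torus knot description then follows from centrality of $(ab)^3$. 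Part (c) and the ``all but finitely many fillings'' half of part (e) are correctly reduced to the cited literature, and you are right that the hard point in (e) --- certifying that the only exceptional pairs lie on the four torus-knot lines --- is not something Thurston's surgery theorem alone gives; the paper itself only certifies $m_1,m_2>6$ via \cite{FKP:filling}, so your deferral matches the paper's own level of rigor there.

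The genuine gap is in part (d). For the first involution you assert, but do not verify, the conjugacy in $B_3$ between the mirror braid $b^{-1}a^{-(2m_1+1)}\Delta^{-2m_2}$ and the braid $b\,a^{3-2m_1}\Delta^{-2m_2-2}$ of $K(1-m_1,-1-m_2)$; matching exponent sums and specializations is necessary but not sufficient, and the computation (which reduces, after cancelling the common central factor, to showing $b^{-1}a^{-(2m_1+1)}$ and $b\,a^{3-2m_1}\Delta^{-2}$ are conjugate) should actually be carried out. More seriously, your argument for $K(-1,m_2)=K(-1,-m_2)$ is circular: you justify it by saying the Gordon--Wu link complement ``carries the required symmetry,'' but that symmetry --- an involution of the partially filled seed link complement fixing the knot component and negating the filling parameter on the remaining belt cusp --- is precisely what has to be exhibited, and it does not follow from anything in parts (a)--(c). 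At the braid level the claim is that $b\,a^{-1}(ab)^{3m_2}$ and $b\,a^{-1}(ab)^{-3m_2}$ have the same closure, a special identity tied to the exponent $-1$ that needs its own proof.
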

The proof of part (e) follows by applying the 6-theorem~\cite{Agol,Lackenby}.

The next remark points out that the knots $K(m_1,m_2)$ are not always
Montesinos, nor alternating, nor adequate. So, it is a bit of a surprise that
one can compute some boundary slopes using the incompressibility criterion
of~\cite{DG} (this can be done for all integer values of $m_1,m_2$), and 
even more, that we can compute the Jones slope in Theorem~\ref{thm.1} 
and verify the Slope Conjecture. Thus, our methods apply beyond the class 
of Montesinos or alternating knots. 

\begin{remark}
\label{rem.nomontesinos}
$K(m_1,m_2)$ is not always a Montesinos knot. Indeed, recall that
the $2$-fold branched cover of a Montesinos
knot is a Seifert manifold~\cite{Montesinos}, in particular not hyperbolic. 
On the other hand, {\tt SnapPy}~\cite{snappy}
confirms that the $2$-fold branched cover of $K(-1,-3)$ (which appears 
in~\cite{GL2}) is a hyperbolic 
manifold, obtained by $(-2,3)$ filling of the sister {\tt m003} of the 
$4_1$ knot. 
\end{remark}

\subsection*{Acknowledgment}
S.G. was supported in part by NSF. R.V. was supported by the 
Netherlands Organization for Scientific Research. 
An early version of a manuscript by the first author
was presented in the Hahei Conference in 
New Zealand, January 2010. The first author wishes to thank Vaughan Jones 
for his kind invitation and hospitality and Marc Culler, Nathan Dunfield and 
Cameron Gordon for many enlightening conversations.


\appendix

\section{Sample values of the colored Jones function of 
$K(m_1,m_2)$}
\label{sec.values}

In this section we give some sample values of the colored Jones function
$J_{K(m_1,m_2),n}(q)$ which were computed using Theorem \ref{thm.cjk}
after a global change of $q$ to $1/q$. These values agree with independent 
calculations of the colored Jones function using the {\tt ColouredJones}
function of the {\tt KnotAtlas}
program of \cite{B-N}, confirming the consistency of our formulas with
KnotAtlas. This is a highly non-trivial check since KnotAtlas
and Theorem \ref{thm.cjk} are completely different formulas of the same
colored Jones polynomial. Here, $J_{K,n}(q)$ is normalized to be $1$
for the unknot (and all $n$) and $J_{K,1}(q)$ is the usual Jones polynomial 
of $K$. 

{\small  
\def\arraystretch{1.5}
\begin{center}
\begin{tabular}{|c|l|} \hline
$n$ & $J_{K(2,1),n}(q)$ 
\\ \hline
$0$ & $1$ 
\\ \hline
$1$ & 
$
q^5+q^7-q^{11}+q^{12}-q^{13}
$
\\ \hline
$2$ & 
$
q^{10}+q^{13}+q^{16}-q^{17}+q^{19}-q^{20}-q^{21}+q^{22}-q^{24}+q^{26}-q^{27}-q^{28}
+2 q^{29}-q^{30}-2 q^{31}+3 q^{32}
$
\\ & 
$
-q^{33}-2 q^{34}+2 q^{35}
$
\\ \hline
$3$ & 
$
q^{15}+q^{19}+q^{23}-q^{25}+q^{27}-q^{29}-q^{33}+q^{34}-2 q^{37}+q^{38}+q^{39}
-2 q^{41}+q^{43}+q^{44}-q^{45}-q^{46}
$
\\ & 
$
+q^{48}+q^{49}-2 q^{50}-q^{51}+q^{52}+2 q^{53}-2 q^{54}-2 q^{55}+2 q^{56}+3 q^{57}
-2 q^{58}-3 q^{59}+3 q^{60}+3 q^{61}
$
\\ & 
$
-2 q^{62}-3 q^{63}+q^{64}+3 q^{65}-q^{66}-q^{67}
$
\\ \hline
$4$ & 
$
q^{20}+q^{25}+q^{30}-q^{33}+q^{35}-q^{38}+q^{40}-q^{41}-q^{43}+2 q^{45}-q^{46}
-q^{48}-q^{49}+2 q^{50}-q^{51}+q^{52}
$
\\ & 
$
-q^{53}-q^{54}+2 q^{55}-2 q^{56}+q^{57}-q^{58}+3 q^{60}-2 q^{61}-2 q^{63}+3 q^{65}
-3 q^{68}-q^{69}+2 q^{70}+q^{71}
$
\\ & 
$
+q^{72}-2 q^{73}
-2 q^{74}+2 q^{76}+2 q^{77}-2 q^{79}-2 q^{80}+2 q^{81}+2 q^{82}+2 q^{83}-2 q^{84}
-4 q^{85}+2 q^{86}
$
\\ & 
$
+2 q^{87}+3 q^{88}-2 q^{89}
-6 q^{90}+3 q^{91}+2 q^{92}+4 q^{93}-3 q^{94}-7 q^{95}+4 q^{96}+2 q^{97}+4 q^{98}
-2 q^{99}
$
\\ & 
$
-7 q^{100}+2 q^{101}+q^{102}
+4 q^{103}-q^{104}-4 q^{105}+q^{106}+q^{108}
$
\\ \hline
\end{tabular}
\end{center}
}

{\small  
\def\arraystretch{1.5}
\begin{center}
\begin{tabular}{|c|l|} \hline
$n$ & $J_{K(1,3),n}(q)$ 
\\ \hline
$0$ & $1$ 
\\ \hline
$1$ & 
$
q^{10}+q^{12}-q^{22}
$
\\ \hline
$2$ & 
$
q^{20}+q^{23}+q^{26}-q^{27}+q^{29}-q^{30}+q^{32}
-q^{33}+q^{35}-q^{36}+q^{38}-q^{39}+q^{41}-q^{42}-q^{43}+q^{44}
$
\\ & 
$
-q^{45}
-q^{46}+q^{47}-q^{49}+q^{50}-q^{52}+q^{53}-q^{55}+q^{56}-q^{58}+q^{59}-q^{61}+q^{62}
-q^{64}+q^{65}
$
\\ \hline
$3$ & 
$
q^{20}+q^{23}+q^{26}-q^{27}+q^{29}-q^{30}+q^{32}
-q^{33}+q^{35}-q^{36}+q^{38}-q^{39}+q^{41}-q^{42}-q^{43}+q^{44}
$
\\ & 
$
-q^{45}
-q^{46}+q^{47}-q^{49}+q^{50}-q^{52}+q^{53}-q^{55}+q^{56}-q^{58}+q^{59}-q^{61}+q^{62}
-q^{64}+q^{65}
$
\\ \hline
$4$ & 
$
q^{40}+q^{45}+q^{50}-q^{53}+q^{55}-q^{58}+q^{60}-q^{63}+q^{65}-q^{68}+q^{70}-q^{73}
+q^{75}-q^{78}+q^{80}-q^{83}
$
\\ & 
$
-q^{88}
-q^{93}+q^{96}-q^{98}+q^{101}-q^{103}+q^{106}-q^{108}
+q^{111}-q^{113}+q^{116}-q^{118}+q^{121}-q^{123}
$
\\ & 
$
+q^{126}-q^{128}
+q^{131}-q^{133}+q^{136}
-q^{138}+q^{141}-q^{143}+q^{146}-q^{148}+q^{151}+q^{156}-q^{160}+q^{161}
$
\\ & 
$
-q^{165}+q^{166}
-q^{170}+q^{171}-q^{175}-q^{180}+q^{181}-q^{185}+q^{186}-q^{190}+q^{191}-q^{195}+q^{196}
$
\\ \hline
\end{tabular}
\end{center}
}

{\small  
\def\arraystretch{1.5}
\begin{center}
\begin{tabular}{|c|l|} \hline
$n$ & $J_{K(-2,3),n}(q)$ 
\\ \hline
$0$ & $1$ 
\\ \hline
$1$ & 
$
q^7+q^9-q^{14}+q^{15}-q^{16}+q^{17}-q^{18}
$
\\ \hline
$2$ & 
$
q^{14}+q^{17}+q^{20}-q^{21}+q^{23}-q^{24}+q^{26}-2 q^{27}+2 q^{29}-2 q^{30}-q^{31}
+3 q^{32}-q^{33}-2 q^{34}+2 q^{35}
$
\\ & 
$
-q^{37}-q^{41}+q^{42}-q^{44}+q^{46}-q^{48}+q^{49}
$
\\ \hline
$3$ & 
$
q^{21}+q^{25}+q^{29}-q^{31}+q^{33}-q^{35}+q^{37}-q^{39}-q^{40}+q^{41}-q^{44}+q^{45}
-q^{46}+2 q^{49}-q^{50}-q^{51}
$
\\ & 
$
-q^{52}+2 q^{53}-q^{55}+q^{57}-q^{58}-2 q^{59}+2 q^{60}+3 q^{61}
-3 q^{62}-4 q^{63}+2 q^{64}+6 q^{65}-2 q^{66}-7 q^{67}
$
\\ & 
$
+q^{68}+6 q^{69}+q^{70}-7 q^{71}
-q^{72}+7 q^{73}+2 q^{74}-7 q^{75}-2 q^{76}+7 q^{77}+2 q^{78}-7 q^{79}-2 q^{80}
+7 q^{81}
$
\\ & 
$
+3 q^{82}-6 q^{83}-2 q^{84}+3 q^{85}+2 q^{86}-q^{87}-q^{88}+q^{92}-q^{93}
$
\\ \hline
$4$ & 
$
q^{28}+q^{33}+q^{38}-q^{41}+q^{43}-q^{46}+q^{48}-q^{51}-q^{56}+q^{57}-2 q^{61}+q^{62}
+q^{64}+q^{65}-2 q^{66}+q^{67}
$
\\ & 
$
-q^{68}+q^{70}-2 q^{71}+2 q^{72}-q^{73}+2 q^{75}-3 q^{76}
+2 q^{77}-2 q^{78}-q^{79}+3 q^{80}-2 q^{81}+4 q^{82}-2 q^{83}
$
\\ & 
$
-3 q^{84}+q^{85}-3 q^{86}
+5 q^{87}+q^{88}-q^{89}-6 q^{91}+3 q^{92}+q^{93}+2 q^{95}-3 q^{96}+2 q^{97}-q^{98}
-3 q^{99}
$
\\ & 
$
+q^{100}+q^{101}+6 q^{102}+q^{103}-8 q^{104}-5 q^{105}+2 q^{106}+11 q^{107}
+6 q^{108}-10 q^{109}-10 q^{110}-q^{111}
$
\\ & 
$
+13 q^{112}+11 q^{113}-10 q^{114}-13 q^{115}
-4 q^{116}+15 q^{117}+14 q^{118}-10 q^{119}-15 q^{120}-4 q^{121}
$
\\ & 
$
+15 q^{122}+15 q^{123}
-11 q^{124}-16 q^{125}-3 q^{126}+15 q^{127}+15 q^{128}-10 q^{129}-16 q^{130}-5 q^{131}
$
\\ & 
$
+14 q^{132}+15 q^{133}-6 q^{134}-12 q^{135}-8 q^{136}+7 q^{137}+9 q^{138}-3 q^{140}
-6 q^{141}+q^{142}+3 q^{143}
$
\\ & 
$
+q^{145}-q^{146}-q^{149}+q^{150}
$
\\ \hline
\end{tabular}
\end{center}
}


\bibliographystyle{hamsalpha}
\bibliography{biblio}
\end{document}